\newtheorem{theorem}{Theorem}[section]
\newtheorem{lemma}[theorem]{Lemma}
\newtheorem{algorithm}[theorem]{Algorithm}
\newtheorem{remark}[theorem]{Remark}
\newtheorem{definition}[theorem]{Definition}
\newtheorem{corollary}[theorem]{Corollary}
\providecommand{\keywords}[1]
{
  \noindent \small	
  \textbf{Keywords:} #1
}
\providecommand{\amscode}[1]
{
  \noindent \small	
  \textbf{AMS subject classifications:} #1
}
\newcommand{\rd}{\, \mathrm{d}}
\newcommand{\rand}{\mathrm{rand}}
\newcommand{\wor}{\mathrm{wor}}
\newcommand{\rms}{\mathrm{rms}}
\newcommand{\app}{\mathrm{app}}
\newcommand{\bszero}{\boldsymbol{0}}
\newcommand{\bsh}{\boldsymbol{h}}
\newcommand{\bsl}{\boldsymbol{\ell}}
\newcommand{\bsx}{\boldsymbol{x}}
\newcommand{\bsy}{\boldsymbol{y}}
\newcommand{\bsz}{\boldsymbol{z}}
\newcommand{\bsgamma}{\boldsymbol{\gamma}}
\newcommand{\bsDelta}{\boldsymbol{\Delta}}
\newcommand{\EE}{\mathbb{E}}
\newcommand{\II}{\mathbb{I}}
\newcommand{\NN}{\mathbb{N}}
\newcommand{\ZZ}{\mathbb{Z}}
\newcommand{\Acal}{\mathcal{A}}
\newcommand{\Pcal}{\mathcal{P}}
\newcommand{\Zcal}{\mathcal{Z}}
\newcommand{\Mod}[1]{\ (\mathrm{mod}\ #1)}
\title{$L_2$-approximation using randomized lattice algorithms}
\author{Mou Cai\thanks{Graduate School of Engineering, The University of Tokyo, 7-3-1 Hongo, Bunkyo-ku, Tokyo 113-8656, Japan ({\tt caimoumou@g.ecc.u-tokyo.ac.jp}; {\tt goda@frcer.t.u-tokyo.ac.jp})}, 
Takashi Goda\footnotemark[1], 
Yoshihito Kazashi\thanks{Department of Mathematics, University of Manchester, Oxford Road, Manchester M13 9PL, UK ({\tt y.kazashi@manchester.ac.uk})}}
\date{\today}
\begin{document}

\maketitle

\sloppy

\begin{abstract}
We propose a randomized lattice algorithm for approximating multivariate periodic functions over the $d$-dimensional unit cube from the weighted Korobov space with mixed smoothness $\alpha > 1/2$ and product weights $\gamma_1,\gamma_2,\ldots\in [0,1]$. Building upon the deterministic lattice algorithm by Kuo, Sloan, and Wo\'{z}niakowski (2006), we incorporate a randomized quadrature rule by Dick, Goda, and Suzuki (2022) to accelerate the convergence rate. This randomization involves drawing the number of points for function evaluations randomly, and selecting a good generating vector for rank-1 lattice points using the randomized component-by-component algorithm. We prove that our randomized algorithm achieves a worst-case root mean squared $L_2$-approximation error of order $M^{-\alpha(2\alpha+1)/(4\alpha+1)+\varepsilon}$ for an arbitrarily small $\varepsilon > 0$, where $M$ denotes the maximum number of function evaluations, and that the error bound is independent of the dimension $d$ if the weights satisfy $\sum_{j=1}^\infty \gamma_j^{1/\alpha} < \infty$. Our upper bound converges faster than a lower bound on the worst-case $L_2$-approximation error for deterministic rank-1 lattice-based approximation proved by Byrenheid, K\"{a}mmerer, Ullrich, and Volkmer (2017). We also show a lower error bound of order $M^{-\alpha/2-1/2}$ for our randomized algorithm, leaving a slight gap between the upper and lower bounds open for future research.
\end{abstract}

\keywords{Approximation of multivariate functions, trigonometric polynomials, randomized algorithms, rank-1 lattice rule, weighted Korobov space}

\amscode{41A63, 42B05, 65C05, 65D15, 65T40}
%%%%%%%%%%%%%%%%%%%%%%%%%%%%%%%%%%

%%%%%%%%%%%%%%%%%%%%%%%%%%%%%%%%%%%%%%%%%%
\section{Introduction}\label{sec:intro}
In this paper, we study a lattice-based algorithm for multivariate $L_2$-approximation of periodic functions defined over the $d$-dimensional unit cube. We assume that a target function $f$ belongs to the weighted Korobov space, denoted by $H_{d, \alpha, \bsgamma}$, with mixed smoothness $\alpha > 1/2$ and product weights $\bsgamma=(\gamma_1,\gamma_2,\ldots)$ where $0\leq \gamma_j\leq 1$ for all $j$, whose precise definition will be given in the next section. This allows for a pointwise representation of $f$ by the absolutely convergent Fourier series
\begin{align*}
f(\bsx)=\sum_{\bsh\in \ZZ^d} \hat{f}(\bsh) \exp(2\pi i \bsh \cdot \bsx),\quad \text{for any } \bsx\in[0,1)^d,
\end{align*}
where $\hat{f}(\bsh)$ denotes the $\bsh$-th Fourier coefficient, defined as 
\[ \hat{f}(\bsh)=\int_{[0,1)^d}f(\bsx)\exp(-2\pi i \bsh\cdot\bsx)\rd \bsx, \]
and $\bsh \cdot \bsx = h_1 x_1 + \cdots + h_d x_d$ denotes the usual dot product. The lattice-based algorithm for multivariate approximation has been studied extensively, as seen in \cite{byrenheid2017tight,cools2020lattice,cools2021fast,kaarnioja2022fast,kammerer2015approximation,kammerer2019approximation,kuo2021function,kuo2024constructing,kuo2006lattice,zeng2009spline,zeng2006error}. For an overview of these works, we also refer to \cite[Chapters~13--15]{dick2022lattice}.

We start by introducing a deterministic algorithm based on the work of Kuo, Sloan, and Wo\'{z}niakowski \cite{kuo2006lattice}, which our approach builds upon.
We first consider approximating a function $f$ by truncating the Fourier series to a finite index set $\Acal_d \subset \ZZ^d$. 
Each of the Fourier coefficients within the index set $\Acal_d$ is then approximated by a rank-1 lattice rule with $N$ points and a generating vector $\bsz \in \{1,\ldots,N-1\}^d$. Here, the rank-1 lattice rule is an equal-weight quadrature rule for approximating high-dimensional integrals:
\[ \int_{[0,1]^d}f(\bsx)\rd \bsx \approx \frac{1}{N}\sum_{k=0}^{N-1}f\left(\left\{\frac{k \bsz}{N}\right\}\right),\]
where $\{x\}=x-\lfloor x\rfloor $ denotes the fractional part of a non-negative real number $x$ and is applied component-wise to a vector. 
By applying this rule to approximate all $\hat{f}(\bsh)$ for $\bsh \in \Acal_d$, the resulting approximation is given by
\begin{align}\label{eq:lattice_algorithm}
A_{N,\bsz,\Acal_d}(f)(\bsx) := \sum_{\bsh \in \Acal_d} \hat{f}_{N,\bsz}(\bsh) \exp(2\pi i \bsh \cdot \bsx),
\end{align}
where
\begin{align*}
\hat{f}_{N,\bsz}(\bsh) := \frac{1}{N} \sum_{k=0}^{N-1} f\left(\left\{\frac{k \bsz}{N}\right\}\right) \exp\left(-2\pi i k\bsh \cdot \bsz/N\right).
\end{align*}

With this approach, choosing a good generating vector $\bsz$ is obviously important. One of the major quality measures of a rank-1 lattice point set (or the corresponding generating vector $\bsz$) is the worst-case $L_2$-approximation error:
\begin{align}\label{eq:worst_error}
e^{\wor\text{-}L_2\text{-}\app}_{d, \alpha, \bsgamma}(A_{N,\bsz,\Acal_d}):=\sup_{\substack{ f\in H_{d, \alpha, \bsgamma} \\ \|f\|_{d, \alpha, \bsgamma} \leq 1}} \| f-A_{N,\bsz,\Acal_d}(f)\|_{L_2},
\end{align}
where $\|\cdot \|_{d, \alpha, \bsgamma}$ denotes the norm of the weighted Korobov space $H_{d, \alpha, \bsgamma}$ and $\|\cdot\|_{L_2}$ denotes the $L_2$ norm over $[0,1)^d$. 

With a good choice of $\bsz$, a worst-case $L_2$-approximation error bound of order $N^{-\alpha/2}$ has been shown \cite{cools2020lattice, cools2021fast, kuo2006lattice}. The convergence rate obtained is tight, due to the lower bound of order $N^{-\alpha/2}$ for any lattice-based algorithm---i.e., any method that uses function evaluations at rank-1 lattice points---proven by Byrenheid, K\"{a}mmerer, Ullrich, and Volkmer \cite{byrenheid2017tight}. This rate is inferior to the optimal rate $N^{-\alpha}(\log N)^{(d-1)\alpha}$ of the best approximation with $N$ function evaluations \cite[Theorem~6.10]{byrenheid2016sampling}. Nonetheless, lattice-based approximation methods attract significant attention because the lattice structure enables efficient computation of the approximate function via the fast Fourier transform \cite{kaarnioja2022fast,kammerer2015approximation}. Additionally, the implied constant of the error bound depends only polynomially on, or is even independent of, the dimension under certain conditions on the weights $\bsgamma=(\gamma_1,\gamma_2,\ldots)$ \cite{cools2020lattice}.

In this paper, we consider a randomized counterpart of the lattice-based algorithm of the form \eqref{eq:lattice_algorithm}. 
It turns out that our algorithm achieves faster convergence than any deterministic lattice-based algorithm in the sense we detail below. 
These results echo similar findings on the improved convergence rates made possible by randomized algorithms \cite{dick2011higher,dick2022component,goda2015construction,goda2024randomizing,kritzer2019lattice,novak1988deterministic,novak2008tractability,novak2010tractability,owen1997scrambled,ullrich2017monte}. 
In our proposed algorithm, we choose the number of points $N$ and the generating vector $\bsz$ randomly (with an additional random shift $\bsDelta$) rather than fixing them. 
This results in a \emph{randomized lattice-based algorithm} denoted by $A^{\rand}_{N,\bsz,\bsDelta,\Acal_d}$, whose precise definition will be introduced later in this paper, for multivariate $L_2$-approximation. As we are working in the randomized setting, the (worst-case) root-mean-square error (RMSE)
\begin{align}\label{eq:rms_error}
e^{\rms\text{-}L_2\text{-}\app}_{d, \alpha, \bsgamma}(A^{\rand}_{N,\bsz,\bsDelta,\Acal_d}):=\sup_{\substack{ f\in H_{d, \alpha, \bsgamma} \\ \|f\|_{d, \alpha, \bsgamma} \leq 1}} \sqrt{\EE\left[\| f-A^{\rand}_{N,\bsz,\bsDelta,\Acal_d}(f)\|^2_{L_2}\right] }
\end{align}
is used as a quality measure.
Note that, due to the randomness in our algorithm $A^{\rand}_{N,\bsz,\bsDelta,\Acal_d}$, we take the expectation of the squared $L_2$ error in \eqref{eq:rms_error}.

Our approach is motivated by recent advances in randomized lattice rules for numerical integration in weighted Korobov spaces. 
In \cite{kritzer2019lattice}, Kritzer, Kuo, Nuyens, and Ullrich revisited the idea of randomly drawing $N$ due to Bakhvalov \cite{bahvalov1961estimate}, and proved the following result: a randomized lattice rule, which draws the number of points $N$ randomly from a set of primes in the interval $(\lceil M/2\rceil, M]$ for a given $M$ and randomly selects a generating vector from a set of ``good'' ones, achieves the worst-case randomized integration error of order $M^{-\alpha - 1/2 + \varepsilon}$ for an arbitrarily small $\varepsilon > 0$. This improves upon the deterministic worst-case integration error of good rank-1 lattice rules, which is of order $N^{-\alpha + \varepsilon}$. However, it is challenging to randomly draw a generating vector from the uniform distribution over a set of ``good'' ones. To address this, Dick, Goda, and Suzuki introduced a randomized component-by-component (CBC) algorithm in \cite{dick2022component}, where the deterministic worst-case integration error is used as a quality measure of generating vectors. Further progress on randomized lattice rules can be found in \cite{goda2024randomized, kuo2023random, nuyens2024randomised}.

Following \cite{dick2022component}, in our randomized lattice-based algorithm for multivariate $L_2$-approximation, we draw the number of points $N$ randomly from a set of primes in the interval $(\lceil M/2\rceil, M]$ for a given $M$ and then select a generating vector using a randomized CBC algorithm. 
We prove that our randomized lattice-based algorithm attains the worst-case RMSE of order $M^{-\alpha(2\alpha+1)/(4\alpha+1)+\varepsilon}$, and that the error bound is independent of the dimension $d$ if the weight parameters satisfy $\sum_{j=1}^\infty \gamma_j^{1/\alpha} < \infty$. It is striking that the convergence rate we obtain is better than the lower bound for any deterministic lattice-based algorithm as proven by Byrenheid, K\"{a}mmerer, Ullrich, and Volkmer \cite{byrenheid2017tight}. Furthermore, we prove a lower error bound of order $M^{-\alpha/2 - 1/2}$ for our randomized algorithm. Filling the gap between the upper and lower bounds is left open for future research.

While the results in this paper shows that randomization improves the approximation rate for lattice-based algorithms, it is worth noting that in general randomization does not improve the optimal approximation rate. Indeed, Novak \cite{novak1992optimal} proved that the best possible rate for the randomized error coincides with that for the (deterministic) worst-case error in general separable Hilbert spaces for algorithms using (either random or deterministic) linear functionals as information about the target function. Later, as a corollary of general results, Krieg and Ullrich \cite{krieg2021function} proved that, for the Korobov spaces, the best possible convergence rate of $L_2$-approximation is the same whether the algorithms use linear functionals or function evaluations as information, up to a logarithmic factor. By combining these results, we conclude that the same optimal randomized error rate holds for randomized algorithms using only $N$ function values, up to a logarithmic factor. By contrast, the results in this paper show that, when restricted to the class of lattice-based algorithms, randomization does improve $L_2$-approximation.

The rest of this paper is organized as follows. In the next section, we present the necessary preliminaries, such as weighted Korobov spaces and rank-1 lattice rules. In Section~\ref{sec:construction}, we introduce our randomized CBC algorithm for multivariate $L_2$-approximation in weighted Korobov spaces and prove an upper bound on the worst-case $L_2$-approximation error that holds for any realization generated by our CBC algorithm. In Section~\ref{sec:upper}, we present our randomized lattice-based algorithm and give a theoretical analysis for its worst-case RMSE. In Section~\ref{sec:lower}, we discuss the corresponding lower bounds for our randomized lattice-based algorithm. 
We conclude this paper with the numerical experiments presented in Section~\ref{sec:numerics}.

%%%%%%%%%%%%%%%%%%%%%%%%%%%%%%%%%%%%%%%%%%
%%%%%%%%%%%%%%%%%%%%%%%%%%%%%%%%%%%%%%%%%%
\section{Preliminaries}\label{sec:pre}
%%%%%%%%%%%%%%%%%%%%%%%%%%%%%%%%%%%%%%%%%%
\subsection{Lattice rules}
Throughout this paper, we denote the set of positive integers by $\NN$. 
We first introduce the rank-1 lattice point set.
\begin{definition}[Rank-1 lattice point set]
    For $N\in \NN$ with $N\geq 2$, let $\bsz=(z_1,\ldots,z_d)\in \{1,\ldots,N-1\}^d$ be given. The rank-1 lattice point set $P_{N,\bsz}$ is defined by
    \[ P_{N,\bsz}:=\left\{ \bsx_n=\left(\left\{ \frac{nz_1}{N}\right\},\ldots,\left\{ \frac{nz_d}{N}\right\}\right) \,\mid \, 0\leq n<N\right\}.\]
\end{definition}

A quadrature that uses a rank-1 lattice point set to approximate the integral of a function $f$ as
\[ \frac{1}{N}\sum_{\bsx\in P_{N,\bsz}}f(\bsx)\approx \int_{[0,1)^d}f(\bsx)\rd \bsx \]
is called a \emph{rank-1 lattice rule} with generating vector $\bsz$.

The dual lattice is important for obtaining a good lattice point set.
\begin{definition}[Dual lattice]
    For $N\in \NN$ with $N\geq 2$ and a generating vector $\bsz\in \{1,\ldots,N-1\}^d$, the set
    \[ P_{N, \bsz}^{\perp}:=\left\{\bsh \in \ZZ^{d} \mid \bsh \cdot \bsz \equiv 0 \pmod N\right\} \]
    is called the dual lattice of the rank-1 lattice point set $P_{N, z}$.
\end{definition} 

The following property of rank-1 lattice rules holds, implying that a rank-1 lattice rule is exact for integrating the Fourier modes if the frequency $\bsh$ is not in the set $P_{N, \bsz}^{\perp}\setminus \{\bszero\}$. 
We refer to \cite[Lemma~1.9]{dick2022lattice} for the proof.

\begin{lemma}[Character property]\label{lem:character}
    For $N\in \NN$ with $N\geq 2$ and a generating vector $\bsz\in \{1,\ldots,N-1\}^d$, 
    \[ \frac{1}{N}\sum_{n=0}^{N-1} \exp \left(2 \pi i \bsh \cdot \bsx_n\right)= \begin{cases}1 & \text{if $\bsh \in P_{N, \bsz}^{\perp}$,} \\ 0 & \text{otherwise,}\end{cases} \]
    holds for any vector $\bsh \in \ZZ^{d}$. 
\end{lemma}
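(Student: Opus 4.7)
The plan is to reduce the sum to a standard geometric sum in a root of unity and split into two cases according to whether $\bsh\cdot\bsz$ is a multiple of $N$ or not.

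First I would rewrite the exponent. Since $\bsx_n = \{n\bsz/N\}$ differs from $n\bsz/N$ only by an integer vector componentwise, and $\bsh\in\ZZ^d$, the inner product $\bsh\cdot\bsx_n$ differs from $n(\bsh\cdot\bsz)/N$ by an integer. By $2\pi i$-periodicity of $\exp$, this gives
\[ \exp(2\pi i\,\bsh\cdot\bsx_n) \;=\; \exp\!\left(2\pi i\,\frac{n(\bsh\cdot\bsz)}{N}\right) \;=\; \omega^{n}, \]
where $\omega := \exp(2\pi i (\bsh\cdot\bsz)/N)$. The quantity under consideration is therefore $\tfrac{1}{N}\sum_{n=0}^{N-1}\omega^n$.

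Next I would split into two cases. If $\bsh\in P_{N,\bsz}^{\perp}$, then $\bsh\cdot\bsz\equiv 0\pmod N$, so $\omega = 1$, and each of the $N$ terms equals $1$, giving the value $1$. Otherwise $\bsh\cdot\bsz\not\equiv 0\pmod N$, so $\omega\neq 1$ while $\omega^{N} = \exp(2\pi i(\bsh\cdot\bsz)) = 1$, since $\bsh\cdot\bsz\in\ZZ$. Applying the geometric sum formula,
\[ \sum_{n=0}^{N-1}\omega^{n} \;=\; \frac{1-\omega^{N}}{1-\omega} \;=\; 0, \]
which, after division by $N$, yields the second case of the lemma.

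No step here is really an obstacle; the only point deserving care is the first one, namely justifying that replacing $\{n\bsz/N\}$ with $n\bsz/N$ inside the exponential is harmless. This follows cleanly from $\bsh\in\ZZ^d$ and the componentwise identity $\{x\} - x \in \ZZ$, together with $\exp(2\pi i k)=1$ for $k\in\ZZ$. Once this is noted, the proof is a two-line geometric-sum computation.
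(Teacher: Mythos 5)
Your proof is correct and complete: the reduction to the geometric sum $\frac{1}{N}\sum_{n=0}^{N-1}\omega^n$ with $\omega=\exp(2\pi i(\bsh\cdot\bsz)/N)$, justified by $\bsh\in\ZZ^d$ and $\{x\}-x\in\ZZ$, is exactly the standard argument. The paper itself gives no proof but defers to \cite[Lemma~1.9]{dick2022lattice}, and your argument is essentially the same as the one found there, so nothing further is needed.
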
 

%%%%%%%%%%%%%%%%%%%%%%%%%%%%%%%%%%%%%%%%%%
\subsection{Weighted Korobov spaces}
Let $\alpha>1/2$ be a real number and $\bsgamma=(\gamma_1,\gamma_2,\ldots)$ be a sequence of non-negative real numbers, where we assume that $0\leq \gamma_j\leq 1$ for all $j$.
Although the condition $\gamma_j \leq 1$ can be omitted without altering the essential nature of the results presented in this paper, we assume this condition throughout to avoid overly technical arguments. We will explicitly indicate where this assumption is used along the way.
For $\bsh \in \ZZ^{d}$, define
\begin{align*}
r_{\alpha, \bsgamma}(\bsh):=\prod_{\substack{j=1 \\ h_{j} \neq 0}}^{d} \frac{\left|h_{j}\right|^{\alpha}}{\gamma_{j}},
\end{align*}
where the empty product is set to $1$, i.e., $r_{\alpha, \bsgamma}(\bszero)=1$. 
If there exists an index $j\in \{1,\ldots,d\}$ such that $\gamma_j=0$ and $h_j\neq 0$, we set $r_{\alpha, \bsgamma}(\bsh)=\infty$. 
Given the assumption that $\gamma_j \leq 1$ for all $j$, it follows that $r_{\alpha, \bsgamma}(\bsh) \geq 1$ for any $\bsh \in \ZZ^{d}$.
Then the weighted Korobov space, denoted by $H_{d, \alpha, \bsgamma}$, is a reproducing kernel Hilbert space with the reproducing kernel
\[ K_{d, \alpha, \bsgamma}(\bsx, \bsy)=\sum_{\bsh \in \ZZ^{d}} \frac{\exp (2 \pi i \bsh \cdot(\bsx-\bsy))}{\left(r_{\alpha, \bsgamma}(\bsh)\right)^{2}}, \]
and the inner product
\[ \langle f, g\rangle_{d, \alpha, \bsgamma}=\sum_{\bsh \in \ZZ^{d}}\left(r_{\alpha, \bsgamma}(\bsh)\right)^{2} \hat{f}(\bsh) \overline{\hat{g}(\bsh)}. \] 
We denote the induced norm by $\|f\|_{d, \alpha, \bsgamma}:=\sqrt{\langle f, f\rangle_{d, \alpha, \bsgamma}}$.

Here the parameter $\alpha > 1/2$ measures the smoothness of periodic functions. The sequence of non-negative weights $\gamma_{1}, \gamma_{2}, \ldots$ plays a role in moderating the relative importance of different variables \cite{sloan1998when}. Here if $\gamma_{j}=0$ for some $1 \leq j \leq d$, we assume that all the Fourier coefficients $\hat{f}(\bsh)$ and $\hat{g}(\bsh)$ for $\bsh \in \ZZ^{d}$ such that $h_{j} \neq 0$ are 0 and we set $\infty \cdot 0=0$. When $\alpha$ is an integer, it is directly related to the number of available square-integrable partial mixed derivatives in each variable \cite[Section~2.1]{dick2022lattice}.

%%%%%%%%%%%%%%%%%%%%%%%%%%%%%%%%%%%%%%%%%%
\subsection{Lattice algorithm for approximation}
Although this paper is concerned with a randomized lattice-based algorithm for function approximation, we first explain a lattice-based algorithm in the deterministic setting in more detail. Let $\Acal_d\subset \ZZ^d$ be a finite index set. As seen in the previous section, we approximate $f\in H_{d, \alpha, \bsgamma}$ as
\begin{align*}
    f(\bsx) & = \sum_{\bsh\in \ZZ^d} \hat{f}(\bsh) \exp(2\pi i \bsh \cdot \bsx)\\
    & \approx \sum_{\bsh\in \Acal_d} \hat{f}(\bsh) \exp(2\pi i \bsh \cdot \bsx)\\
    & \approx \sum_{\bsh \in \Acal_d} \hat{f}_{N,\bsz}(\bsh) \exp(2\pi i \bsh \cdot \bsx) =: A_{N,\bsz,\Acal_d}(f)(\bsx),
\end{align*}
where 
\begin{align*}
\hat{f}_{N,\bsz}(\bsh) := \frac{1}{N} \sum_{k=0}^{N-1} f\left(\left\{\frac{k \bsz}{N}\right\}\right) \exp\left(-2\pi i k\bsh \cdot \bsz/N\right).
\end{align*}
That is, we first truncate the whole Fourier series of $f$ to the index set $\Acal_d$ and then approximate all the Fourier coefficients $\hat{f}(\bsh)$ with $\bsh\in \Acal_d$ by a rank-1 lattice rule. 

When selecting the index set $\Acal_d$, it is desirable to minimize its size to reduce computational cost while including as many indices as possible that correspond to ``large'' Fourier coefficients.
To this end, for a real number $T > 0$, we define $\Acal_{d}(T)$ by
\begin{align*}
\Acal_d(T) := \left\{ \bsh \in \ZZ^d : (r_{\alpha, \bsgamma}(\bsh))^2 \leq T \right\},
\end{align*}
and approximate $f \in H_{d,\alpha,\bsgamma}$ by $A_{N,\bsz,\Acal_d(T)}(f)(\bsx)$. We choose $T$ suitably so that the approximation error is small depending on $N$.

The worst-case error of the deterministic approximation algorithm $A_{N,\bsz,\Acal_d(T)}$ in the space $H_{d,\alpha,\bsgamma}$ is defined by 
\begin{align*}
e^{\wor\text{-}L_2\text{-}\app}_{d,\alpha,\bsgamma}(A_{N,\bsz,\Acal_d(T)}) := 
\sup_{\substack{f \in H_{d,\alpha,\bsgamma} \\ \| f \|_{d,\alpha,\bsgamma} \leq 1}} 
\left\| f - A_{N,\bsz,\Acal_d(T)}(f) \right\|_{L_2}.
\end{align*}
As a reference value, we use the initial error where we approximate the function with $0$, i.e.,
\begin{align*}
e^{\wor\text{-}L_2\text{-}\app}_{d,\alpha,\bsgamma}(0) := 
\sup_{\substack{f \in H_{d,\alpha,\bsgamma} \\ \| f \|_{d,\alpha,\bsgamma} \leq 1}} 
\left\| f  \right\|_{L_2}=1.
\end{align*}
Here the last equality holds since we have 
\[ \|f\|^2_{L_2}=\sum_{\bsh \in \ZZ^{d}} |\hat{f}(\bsh)|^2 \leq \sum_{\bsh \in \ZZ^{d}} \left(r_{\alpha, \bsgamma}(\bsh)\right)^{2}|\hat{f}(\bsh)|^2 = \| f \|^2_{d,\alpha,\bsgamma}\]
for any $f\in H_{d,\alpha,\bsgamma}$, where the inequality follows from $r_{\alpha, \bsgamma}(\bsh) \geq 1$ for all $\bsh$ under the assumption that $\alpha>1/2$ and $0\leq \gamma_j\leq 1$ for all $j$, and for the constant function $f=1$, it holds that $\|f\|_{L_2}= \| f \|_{d,\alpha,\bsgamma}=1$, see \cite{kuo2006lattice}.

As shown, for instance, in \cite[Eq.~(13.12)]{dick2022lattice}, it has been known that the following worst-case error bound holds:
\begin{lemma}
Let $\alpha > 1/2$ and $\bsgamma = (\gamma_1, \gamma_2, \ldots) \in [0, 1]^{\mathbb{N}}$. For any $N\in \NN$ with $N\geq 2$, $\bsz \in \{1, \ldots, N-1\}^d$ and $T>0$, the squared worst-case $L_2$-approximation error of the lattice-based algorithm $A_{N,\bsz,\Acal_d(T)}$ is bounded above by
\begin{align*}
\left(e^{\wor\text{-}L_2\text{-}\app}_{d,\alpha,\bsgamma}(A_{N,\bsz,\Acal_d(T)})\right)^2 \leq \frac{1}{T}+T \left[R_{N,d,\alpha, \bsgamma}(\bsz)\right]^2,
\end{align*}
where
\begin{align*} 
R_{N,d,\alpha, \bsgamma}(\bsz) = \left( \sum_{ \bsh\in \ZZ^d}\frac{1}{r^2_{\alpha,\bsgamma}(\bsh)}\sum_{\bsl \in P^{\perp}_{N,\bsz} \setminus \{\bszero\}} \frac{1}{r^2_{\alpha,\bsgamma}(\bsh+\bsl)} \right)^{1 / 2} .
\end{align*}
\end{lemma}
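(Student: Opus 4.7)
The plan is to decompose the $L_2$ error via Parseval into a truncation part (frequencies outside $\Acal_d(T)$) and an aliasing part (frequencies inside $\Acal_d(T)$). Writing
\[
f - A_{N,\bsz,\Acal_d(T)}(f) = \sum_{\bsh\notin \Acal_d(T)} \hat{f}(\bsh)\exp(2\pi i \bsh\cdot\bsx) + \sum_{\bsh\in \Acal_d(T)} \bigl(\hat{f}(\bsh)-\hat{f}_{N,\bsz}(\bsh)\bigr)\exp(2\pi i \bsh\cdot\bsx),
\]
orthogonality of the trigonometric system yields
\[
\| f - A_{N,\bsz,\Acal_d(T)}(f)\|_{L_2}^2 = \sum_{\bsh\notin \Acal_d(T)} |\hat{f}(\bsh)|^2 + \sum_{\bsh\in \Acal_d(T)} |\hat{f}(\bsh)-\hat{f}_{N,\bsz}(\bsh)|^2.
\]
For the truncation piece I would insert $r^2_{\alpha,\bsgamma}(\bsh)/r^2_{\alpha,\bsgamma}(\bsh)$ and use the defining inequality $r^2_{\alpha,\bsgamma}(\bsh)>T$ on $\ZZ^d\setminus \Acal_d(T)$, producing the bound $\sum_{\bsh\notin \Acal_d(T)} |\hat{f}(\bsh)|^2 \leq \|f\|^2_{d,\alpha,\bsgamma}/T \leq 1/T$, which delivers the first term $1/T$ of the asserted estimate.

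For the aliasing piece, the character property (Lemma~\ref{lem:character}) produces the familiar identity
\[
\hat{f}_{N,\bsz}(\bsh) - \hat{f}(\bsh) = \sum_{\bsl\in P_{N,\bsz}^{\perp}\setminus\{\bszero\}} \hat{f}(\bsh+\bsl).
\]
The key maneuver is to trade the restriction $\bsh\in \Acal_d(T)$ against a reciprocal smoothness weight: since $r^2_{\alpha,\bsgamma}(\bsh)\leq T$ on $\Acal_d(T)$, the inequality $1 \leq T/r^2_{\alpha,\bsgamma}(\bsh)$ holds there, and dropping the restriction afterwards gives
\[
\sum_{\bsh\in \Acal_d(T)}|\hat{f}(\bsh)-\hat{f}_{N,\bsz}(\bsh)|^2 \;\leq\; T\sum_{\bsh\in\ZZ^d}\frac{|\hat{f}(\bsh)-\hat{f}_{N,\bsz}(\bsh)|^2}{r^2_{\alpha,\bsgamma}(\bsh)}.
\]
Next I would apply Cauchy--Schwarz to the aliasing sum through the splitting $\hat{f}(\bsh+\bsl) = r_{\alpha,\bsgamma}(\bsh+\bsl)\hat{f}(\bsh+\bsl)\cdot 1/r_{\alpha,\bsgamma}(\bsh+\bsl)$. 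One factor becomes $\sum_{\bsl} r^2_{\alpha,\bsgamma}(\bsh+\bsl)|\hat{f}(\bsh+\bsl)|^2$, which is bounded uniformly in $\bsh$ by $\|f\|^2_{d,\alpha,\bsgamma}\leq 1$ because $\{\bsh+\bsl : \bsl\in P_{N,\bsz}^{\perp}\setminus\{\bszero\}\}\subset \ZZ^d$ without multiplicity. The remaining factor $\sum_{\bsl}1/r^2_{\alpha,\bsgamma}(\bsh+\bsl)$ pairs with the outer weight $1/r^2_{\alpha,\bsgamma}(\bsh)$ and, after summing over $\bsh\in\ZZ^d$, reassembles exactly $[R_{N,d,\alpha,\bsgamma}(\bsz)]^2$, yielding the second term $T[R_{N,d,\alpha,\bsgamma}(\bsz)]^2$.

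The main obstacle I anticipate is choosing \emph{when} to introduce the factor $T$. Performing the substitution $1\leq T/r^2_{\alpha,\bsgamma}(\bsh)$ \emph{after} Cauchy--Schwarz would force one to analyse $\sum_{\bsh\in \Acal_d(T)}\sum_{\bsl}1/r^2_{\alpha,\bsgamma}(\bsh+\bsl)$, which does not match the definition of $R_{N,d,\alpha,\bsgamma}(\bsz)$. Carrying out the exchange before Cauchy--Schwarz, and only afterwards extending the summation from $\Acal_d(T)$ to all of $\ZZ^d$, is precisely what engineers the upper bound into the desired form $1/T + T[R_{N,d,\alpha,\bsgamma}(\bsz)]^2$.
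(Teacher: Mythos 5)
Your proof is correct and is essentially the standard argument behind this bound (which the paper itself only cites from the literature, and which also underlies its own proof of the randomized analogue, Lemma~\ref{lem:random_error_bound}): Parseval splits the error into truncation and aliasing parts, the truncation part is bounded by $1/T$ via $r^2_{\alpha,\bsgamma}(\bsh)>T$ off $\Acal_d(T)$, and the aliasing part is handled by the character property plus Cauchy--Schwarz with the weight exchange $1\leq T/r^2_{\alpha,\bsgamma}(\bsh)$ on $\Acal_d(T)$. Your closing worry about the order of operations is immaterial: inserting $T/r^2_{\alpha,\bsgamma}(\bsh)$ after Cauchy--Schwarz works just as well, since the sum over $\bsh$ is still restricted to $\Acal_d(T)$ at that point and can be extended to $\ZZ^d$ afterwards.
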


This lemma implies that generating vectors $\bsz$ with a small value of $R_{N,d,\alpha, \bsgamma}(\bsz)$ can yield a small worst-case error. To use $R_{N,d,\alpha, \bsgamma}(\bsz)$ as a quality criterion for constructing good $\bsz$, it is important that $R_{N,d,\alpha, \bsgamma}(\bsz)$ has a computable formula, as shown below.
\begin{remark}\label{rem:computable}
$\left[R_{N,d,\alpha, \bsgamma}(\bsz)\right]^2$ admits the representation
\begin{align*}
 \left[R_{N,d,\alpha, \bsgamma}(\bsz)\right]^2=-\prod_{j=1}^{d} (1+2\zeta(4\alpha)\gamma_j^4)+\frac{1}{N} \sum_{k=0}^{N-1} \prod_{j=1}^{d} \left(1+\gamma_j ^2\sum_{h\in\mathbb{Z}\setminus \{0 \} } \frac{\exp(2\pi i k z_j h/N)}{|h|^{2\alpha}} \right)^2 ,
\end{align*}
see \cite[Section~3.4]{dick2007lattice} and \cite[Remark~13.4]{dick2022lattice}, where $\zeta(x):=\sum_{i=1}^{\infty}i^{-x}$ for $x>1$ denotes the Riemann zeta function.
When $\alpha$ is an integer, we further have
\begin{align*}
 \left[R_{N,d,\alpha, \bsgamma}(\bsz)\right]^2=-\prod_{j=1}^{d} (1+2\zeta(4\alpha)\gamma_j^4)+\frac{1}{N} \sum_{k=0}^{N-1} \prod_{j=1}^{d} \left(1+\gamma_j ^2 \frac{(-1)^{\alpha+1}(2\pi)^{2\alpha}}{(2\alpha)!} B_{2\alpha}\left(\left\{\frac{kz_j}{N}\right\}\right) \right)^2 ,
\end{align*}
where $B_{2\alpha}$ denotes the Bernoulli polynomial of degree $2\alpha$. This simplification can be easily checked by the fact that $B_{2\alpha}$ has the absolutely convergent Fourier series
\begin{align*}
B_{2\alpha}(x)=\frac{2(-1)^{\alpha+1}(2\alpha)!}{(2\pi)^{2\alpha}}\sum_{k=1}^{\infty}\frac{\cos(2\pi kx)}{k^{2\alpha}}=\frac{(-1)^{\alpha+1}(2\alpha)!}{(2\pi)^{2\alpha}}\sum_{k\in\mathbb{Z}\backslash \{0\}}\frac{\exp(2\pi ik x)}{|k|^{2\alpha}},
\end{align*}
for any $x\in[0,1]$.
\end{remark}

%%%%%%%%%%%%%%%%%%%%%%%%%%%%%%%%%%%%%%%%%%
%%%%%%%%%%%%%%%%%%%%%%%%%%%%%%%%%%%%%%%%%%
\section{Construction algorithm}\label{sec:construction}
As mentioned in the introduction, in our randomized lattice-based algorithm for multivariate $L_2$-approximation, we draw the number of points $N$ randomly and then select a generating vector using a randomized CBC algorithm.
For $M \geq 2$, define a set of prime numbers
\begin{equation}
	\Pcal_M:=\{\text{$N$ is prime} \mid \lceil M / 2\rceil<N \leq M \}. \label{def:P_M}
\end{equation}
The number of points $N$ will be drawn from the uniform distribution over this set $\Pcal_M$.
The cardinality of $\Pcal_M$ is known to be lower bounded as $\left|\mathcal{P}_{M}\right| \geq c M / \log M$ for some absolute constant $c>0$, see \cite[Corollaries 1--3]{rosser1962approximate}. Let $\tau \in(0,1)$ be given. 
Similar to \cite{dick2022component}, consider the following randomized CBC algorithm to construct a good generating vector for a randomized lattice-based approximation. 
\begin{algorithm}\label{alg:random_cbc}
    For given $M, d \in \NN, \alpha > 1/2, \bsgamma \in[0,1]^{\NN}$ and $\tau \in(0,1)$, do the following:
    \begin{enumerate}
        \item Randomly draw $N \in \NN$ from the uniform distribution over the set $\Pcal_M$.
        \item Set $z_{1}=1$.
        \item \textbf{For} $s$ from $2$ to $d$, do the following:
        \begin{enumerate}
            \item Compute
            \[
            R_{N,s,\alpha,\bsgamma}\left(\bsz_{s-1}, z_{s}\right)=\left(\sum_{\bsh\in \ZZ^s} \sum_{\bsl \in P_{N,( \bsz_{s-1},z_s)}^{\perp}\setminus \{\bszero\}} \frac{1}{r^2_{\alpha, \bsgamma}(\bsh) r^2_{\alpha, \bsgamma}(\bsh+\bsl)}\right)^{1/2},
            \]
            for all $z_{s} \in\{1, \ldots, N-1\}$, where we write $\bsz_{s-1}=\left(z_{1}, \ldots, z_{s-1}\right)$.
            \item Construct a $\lceil\tau (N-1)\rceil$-element set $Z_{s} \subset\{1, \ldots, N-1\}$ such that $R_{N,s,\alpha,\bsgamma}\left(\bsz_{s-1}, \zeta\right) \leq R_{N,s,\alpha,\bsgamma}\left(\bsz_{s-1}, \eta\right)$ for all $\zeta \in Z_{s}$ and $\eta \in\{1, \ldots, N-1\} \setminus Z_{s}$.
            \item Randomly draw $z_{s}$ from the uniform distribution over the set $Z_{s}$.
        \end{enumerate}
        \textbf{end for}
    \end{enumerate}
\end{algorithm}

In the step~3.(b), we need to arrange the integers $1, \ldots, N-1$ such that the corresponding value $R_{N,s,\alpha,\bsgamma}$ is listed in ascending order and then pick one of the first $\lceil\tau(N-1)\rceil$ integers. 
This arrangement may not be unique if some of the integers yield the same value of $R_{N,s,\alpha,\bsgamma}$. 
However, we can always make the ordering unique by further arranging those integers in ascending order. 
Moreover, as shown in Remark~\ref{rem:computable}, when $\alpha$ is an integer, $R_{N,s,\alpha,\bsgamma}$ has a computable formula as
\begin{align*}
 \left[R_{N,s,\alpha, \bsgamma}\left(\bsz_{s-1}, z_{s}\right)\right]^2 & = -\prod_{j=1}^{s} (1+2\zeta(4\alpha)\gamma_j^4)\\
 & \quad +\frac{1}{N} \sum_{k=0}^{N-1} \theta_{\bsz_{s-1}, \alpha,\bsgamma}(k) \left(1+\gamma_s ^2 \frac{(-1)^{\alpha+1}(2\pi)^{2\alpha}}{(2\alpha)!} B_{2\alpha}\left(\left\{\frac{kz_s}{N}\right\}\right) \right)^2 ,
\end{align*}
where we write
\[ \theta_{\bsz_{s-1}, \alpha,\bsgamma}(k) = \prod_{j=1}^{s-1} \left(1+\gamma_j ^2 \frac{(-1)^{\alpha+1}(2\pi)^{2\alpha}}{(2\alpha)!} B_{2\alpha}\left(\left\{\frac{kz_j}{N}\right\}\right) \right)^2 . \]
By keeping $\theta_{\bsz_{s-1}, \alpha,\bsgamma}(k)$ for all $k=0,1,\ldots,N-1$, computing $\left[R_{N,s,\alpha, \bsgamma}\left(\bsz_{s-1}, z_{s}\right)\right]^2$ for all $z_s\in \{1,\ldots,N-1\}$ can be done with $O(N\log N)$ arithmetic operations with the help of the fast Fourier transform, according to the work by Nuyens and Cools \cite{nuyens2006fast}. Thus, the total cost for a single run of Algorithm~\ref{alg:random_cbc} is of order $dM\log M$.

For a fixed $N \in \Pcal_M$, let $\Zcal_{N,d,\tau}$ denote the set of possible generating vectors produced by Algorithm~\ref{alg:random_cbc}. 

For any $\bsz$ drawn by Algorithm~\ref{alg:random_cbc}, the following bound holds:
\begin{theorem}\label{thm:worst-case_bound}
Let $M, d \in \mathbb{N}$, $\alpha > 1 / 2$, $\bsgamma \in [0,1]^{\mathbb{N}}$, and $\tau \in (0,1)$ with $M \geq 4$ be given. For any $N \in \Pcal_M$ and $\bsz \in \Zcal_{N, d, \tau}$ drawn by Algorithm~\ref{alg:random_cbc}, we have
\begin{align*}
R_{N,s,\alpha, \bsgamma}(\bsz_s) \leq \left(\frac{1}{(1-\tau)(N-1)} \prod_{j=1}^{s} \left( 1+2^{2\alpha+2} \gamma_j^{1/\lambda} \zeta(\alpha/\lambda) \right)^2 \right)^{\lambda},
\end{align*}
for any $1/2 \leq \lambda < \alpha$ and $1 \leq s \leq d$.
\end{theorem}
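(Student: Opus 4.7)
The plan is to induct on $s$, adapting the randomized CBC averaging strategy of \cite{dick2022component} to the present $L_2$-approximation criterion. The natural induction is on the Jensen-majorised quantity
\begin{align*}
\theta_s(\bsz_s):=\sum_{\bsh\in\ZZ^s}\sum_{\bsl\in P^{\perp}_{N,\bsz_s}\setminus\{\bszero\}}\frac{1}{(r_{\alpha,\bsgamma}(\bsh)\,r_{\alpha,\bsgamma}(\bsh+\bsl))^{1/\lambda}},
\end{align*}
which, because $1/(2\lambda)\leq 1$ (from $\lambda\geq 1/2$), satisfies $R_{N,s,\alpha,\bsgamma}(\bsz_s)^{1/\lambda}\leq \theta_s(\bsz_s)$ by Jensen's inequality. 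Proving $\theta_s(\bsz_s)\leq \prod_{j=1}^s(1+2^{2\alpha+2}\gamma_j^{1/\lambda}\zeta(\alpha/\lambda))^2/((1-\tau)(N-1))$ then implies the theorem after raising to the $\lambda$-th power. The base case $s=1$ follows by a direct computation using $P_{N,1}^{\perp}=N\ZZ$ and the one-dimensional convolution estimate, exploiting that $\alpha/\lambda>1$ so that $\theta_1(1)$ already decays faster than $1/N$.

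For the inductive step, I pair the selection rule with the Jensen bound. Since $z_s$ is drawn from the $\lceil\tau(N-1)\rceil$ candidates with the smallest values of $R_{N,s,\alpha,\bsgamma}(\bsz_{s-1},\cdot)$, and this ordering is preserved by the monotone map $x\mapsto x^{1/\lambda}$, an order-statistic argument combined with the elementary inequality $N-\lceil\tau(N-1)\rceil\geq (1-\tau)(N-1)$ (verified by separating $\tau(N-1)\in\ZZ$ from $\tau(N-1)\notin\ZZ$) gives
\begin{align*}
R_{N,s,\alpha,\bsgamma}(\bsz_s)^{1/\lambda}\leq \frac{1}{(1-\tau)(N-1)}\sum_{z=1}^{N-1}R_{N,s,\alpha,\bsgamma}(\bsz_{s-1},z)^{1/\lambda} \leq \frac{1}{(1-\tau)(N-1)}\sum_{z=1}^{N-1}\theta_s(\bsz_{s-1},z).
\end{align*}

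It remains to bound $\sum_{z=1}^{N-1}\theta_s(\bsz_{s-1},z)$. Swapping the order of summation introduces the counting function $|\{z\in\{1,\ldots,N-1\}:\bsl\cdot(\bsz_{s-1},z)\equiv 0\Mod N\}|$, which by primality of $N$ equals $N-1$, $0$, or at most $1$ according to whether $N\mid\ell_s$ and whether $\bsl_{s-1}\cdot\bsz_{s-1}\equiv 0\Mod N$. Decomposing $\bsh=(\bsh_{s-1},h_s)$ and $\bsl=(\bsl_{s-1},\ell_s)$ and factorising over the $(s-1)$- and $1$-dimensional coordinate blocks, the ``on-lattice'' subcase $N\mid\ell_s$, $\bsl_{s-1}\in P^\perp_{N,\bsz_{s-1}}\setminus\{\bszero\}$ produces $(N-1)\cdot\theta_{s-1}(\bsz_{s-1})$ times a $1$-D factor in the $s$-th coordinate, while the remaining subcases produce the unrestricted sum $\bigl(\sum_{\bsh_{s-1}}r^{-1/\lambda}(\bsh_{s-1})\bigr)^2=\prod_{j=1}^{s-1}(1+2\gamma_j^{1/\lambda}\zeta(\alpha/\lambda))^2$ times their respective $1$-D factors. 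Each $1$-D factor is controlled by the one-dimensional convolution estimate $\sum_{h\in\ZZ}\max(|h|,1)^{-\alpha/\lambda}\max(|h+k|,1)^{-\alpha/\lambda}\leq 2^{\alpha/\lambda+2}\zeta(\alpha/\lambda)\max(|k|,1)^{-\alpha/\lambda}$ (valid since $\alpha/\lambda>1$), then bounded by $2^{2\alpha+2}\zeta(\alpha/\lambda)$ using $\lambda\geq 1/2$, and by $\gamma_s^{1/\lambda}$ (rather than $\gamma_s^{2/\lambda}$) using $\gamma_s\leq 1$. All contributions collapse to $(1+2^{2\alpha+2}\gamma_s^{1/\lambda}\zeta(\alpha/\lambda))^2$ times either $\theta_{s-1}(\bsz_{s-1})$ or $\prod_{j=1}^{s-1}(1+2^{2\alpha+2}\gamma_j^{1/\lambda}\zeta(\alpha/\lambda))^2$, and the induction hypothesis closes the argument.

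The hard part is this final bookkeeping: the $(\bsh,\bsl)$-double sum must be decomposed into its on-lattice, off-lattice, and diagonal (i.e.\ $\bsl_{s-1}=\bszero$) contributions and every one of them must be absorbed cleanly into the single uniform product bound. One has to track where the one-dimensional convolution estimate is applied once versus twice, and to invoke $\lambda\geq 1/2$ (so $\alpha/\lambda\leq 2\alpha$) and $\gamma_j\leq 1$ precisely where needed to consolidate the exponents into the single constant $2^{2\alpha+2}\gamma_s^{1/\lambda}\zeta(\alpha/\lambda)$ appearing in the claim.
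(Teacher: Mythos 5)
Your order-statistic step is fine as far as it goes, but the induction it feeds does not close, for two reasons. First, there is a mismatch between the quantity your selection argument controls and the quantity your induction hypothesis needs. At step $s$ your ``on-lattice'' term (the case $N\mid\ell_s$, $\bsl_{s-1}\in P^{\perp}_{N,\bsz_{s-1}}\setminus\{\bszero\}$) is bounded by a one-dimensional factor times $\theta_{s-1}(\bsz_{s-1})$, so you need the Jensen-majorised quantity $\theta_{s-1}$ to be small \emph{for the vector the algorithm actually chose}. But Algorithm~\ref{alg:random_cbc} ranks the candidates by $R_{N,s-1,\alpha,\bsgamma}$, and your own inductive step only ever produces a bound on $R_{N,s-1,\alpha,\bsgamma}(\bsz_{s-1})^{1/\lambda}$, not on $\theta_{s-1}(\bsz_{s-1})$: Jensen gives $R^{1/\lambda}\leq\theta$ pointwise, but the $R$-ordering and the $\theta$-ordering of $\{1,\ldots,N-1\}$ need not coincide, so a $z_{s-1}$ that is among the best $\lceil\tau(N-2+1)\rceil$ for $R$ may be among the worst for $\theta$. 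Hence the statement you claim to induct on ($\theta_s(\bsz_s)\leq\prod_{j\leq s}(\cdots)^2/((1-\tau)(N-1))$) is never established, and the statement you do establish ($R^{1/\lambda}$ small) is not strong enough to run your decomposition at the next step. Second, even if you granted yourself the bound on $\theta_{s-1}(\bsz_{s-1})$, you push the $z$-independent on-lattice term through the average $\frac{1}{(1-\tau)(N-1)}\sum_{z=1}^{N-1}$; summing a constant over $z$ cancels the $(N-1)$ but not the $(1-\tau)$, so each coordinate contributes an extra factor $1/(1-\tau)$ (this is visible already when $\gamma_s=0$, where the one-dimensional factor is $1>1-\tau$). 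The recursion then yields at best $(1-\tau)^{-s}$ in place of the single $(1-\tau)^{-1}$ in the theorem, which also spoils the dimension-independence used later in the paper.

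The paper avoids both problems by splitting \emph{before} applying Jensen: one has the exact identity
\begin{align*}
\left(R_{N,s,\alpha,\bsgamma}(\bsz_{s-1},z_s)\right)^2=\left(1+2\gamma_s^4\zeta(4\alpha)\right)\left(R_{N,s-1,\alpha,\bsgamma}(\bsz_{s-1})\right)^2+B_{N,s,\alpha,\bsgamma}(\bsz_{s-1},z_s),
\end{align*}
where the first term is independent of $z_s$ and $B$ collects the $\ell_s\neq 0$ part. Jensen (subadditivity of $x\mapsto x^{1/(2\lambda)}$) is applied to this two-term sum: the first term is handled by the induction hypothesis on $R_{N,s-1,\alpha,\bsgamma}(\bsz_{s-1})$ itself---exactly the quantity the previous selection step controls---with no averaging and hence no extra $(1-\tau)^{-1}$; only $(B_{N,s,\alpha,\bsgamma})^{1/(2\lambda)}$ is averaged over $z_s$ and treated by Markov's inequality. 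Because $R^2(\cdot)$ and $B(\cdot)$ differ by a $z_s$-independent constant, ranking by $R$ is the same as ranking by $B$, so the Markov bound really applies to every $z_s$ the algorithm can select. If you restructure your inductive step this way---keeping the induction on $R^{1/\lambda}$, peeling off the $\ell_s=0$ block exactly, and Jensen-ising and averaging only the genuinely new block---your bookkeeping with the one-dimensional convolution estimate and the consolidation into $1+2^{2\alpha+2}\gamma_s^{1/\lambda}\zeta(\alpha/\lambda)$ goes through and recovers the stated bound.
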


In the following proof, we often use the subadditivity, a version of Jensen's inequality,
\begin{align}\label{eq:jensen}
    \left(\sum_{i=1}^{\infty}a_i\right)^c \leq \sum_{i=1}^{\infty}a_i^c,
\end{align}
which holds for any summable sequence $(a_i)_{i \in \NN}$ with $a_i \geq 0$ for all $i$ and any $0 < c \leq 1$.

\begin{proof}[Proof of Theorem~\ref{thm:worst-case_bound}] We prove the result by induction on $s$. For $s=1$, since we have
\begin{align*}
    \left(R_{N,1,\alpha,\bsgamma}(1)\right)^2=\sum_{h\in \ZZ} \sum_{\substack{\ell \in \ZZ\setminus \{0\}\\ \ell\equiv 0\Mod{N}}} \frac{1}{r^2_{\alpha, \gamma_1}(h) r^2_{\alpha, \gamma_1}(h+\ell)},
\end{align*}
applying Jensen's inequality \eqref{eq:jensen}, for any $1/2\leq \lambda<\alpha$ leads to 
\begin{align*}
    \left(R_{N,1,\alpha,\bsgamma}\left(1\right)\right)^{1/\lambda} & \leq \sum_{h\in \ZZ} \sum_{\substack{\ell \in \ZZ\setminus \{0\}\\ \ell\equiv 0\Mod{N}}} \frac{1}{(r_{\alpha, \gamma_1}(h) r_{\alpha, \gamma_1}(h+\ell))^{1/\lambda}}\\
    & = \sum_{\substack{h \in \ZZ\\ h\equiv 0\Mod{N}}} \sum_{\substack{\ell \in \ZZ\setminus \{0\}\\ \ell\equiv 0\Mod{N}}} \frac{1}{(r_{\alpha, \gamma_1}(h) r_{\alpha, \gamma_1}(h+\ell))^{1/\lambda}}\\
    & \quad + \sum_{\substack{h \in \ZZ\\ h\not\equiv 0\Mod{N}}} \sum_{\substack{\ell \in \ZZ\setminus \{0\}\\ \ell\equiv 0\Mod{N}}} \frac{1}{(r_{\alpha, \gamma_1}(h) r_{\alpha, \gamma_1}(h+\ell))^{1/\lambda}}.
\end{align*}
For the first double sum, we have
\begin{align*}
    & \sum_{\substack{h \in \ZZ\\ h\equiv 0\Mod{N}}} \sum_{\substack{\ell \in \ZZ\setminus \{0\}\\ \ell\equiv 0\Mod{N}}} \frac{1}{(r_{\alpha, \gamma_1}(h) r_{\alpha, \gamma_1}(h+\ell))^{1/\lambda}}\\
    & = \sum_{h \in \ZZ} \sum_{\ell \in \ZZ\setminus \{0\}} \frac{1}{(r_{\alpha,\gamma_1}(Nh) r_{\alpha, \gamma_1}(Nh+N\ell))^{1/\lambda}}\\
    & = \sum_{h \in \ZZ} \frac{1}{(r_{\alpha, \gamma_1}(Nh))^{1/\lambda}}\sum_{\ell \in \ZZ} \frac{1}{(r_{\alpha, \gamma_1}(Nh+N\ell))^{1/\lambda}}- \sum_{h \in \ZZ} \frac{1}{(r_{\alpha, \gamma_1}(Nh))^{2/\lambda}}\\
    & = \left(1+2\sum_{h=1}^{\infty}\frac{\gamma_1^{1/\lambda}}{|Nh|^{\alpha/\lambda}}\right)^2-\left( 1+2\sum_{h=1}^{\infty}\frac{\gamma_1^{2/\lambda}}{|Nh|^{2\alpha/\lambda}}\right)\\
    & = \frac{4\gamma_1^{1/\lambda}\zeta(\alpha/\lambda)}{N^{\alpha/\lambda}}+\frac{4\gamma_1^{2/\lambda}(\zeta(\alpha/\lambda))^2}{N^{2\alpha/\lambda}}-\frac{2\gamma_1^{2/\lambda}\zeta(2\alpha/\lambda)}{N^{2\alpha/\lambda}} \leq \frac{4\gamma_1^{1/\lambda}\zeta(\alpha/\lambda)}{N}+\frac{4\gamma_1^{2/\lambda}(\zeta(\alpha/\lambda))^2}{N^2}.
\end{align*}
For the second double sum, recall that we assume $M\geq 4$, so that $N>\lceil M / 2\rceil\geq 2$. Since $N\in \Pcal_M$ is prime, this means that $N$ must be always odd. Then it holds that
\begin{align*}
    & \sum_{\substack{h \in \ZZ\\ h\not\equiv 0\Mod{N}}} \sum_{\substack{\ell \in \ZZ\setminus \{0\}\\ \ell\equiv 0\Mod{N}}} \frac{1}{(r_{\alpha, \gamma_1}(h) r_{\alpha, \gamma_1}(h+\ell))^{1/\lambda}}\\
    & = \sum_{\substack{h \in \ZZ\\ h\not\equiv 0\Mod{N}}} \sum_{\ell \in \ZZ\setminus \{0\}} \frac{1}{(r_{\alpha, \gamma_1}(h) r_{\alpha, \gamma_1}(h+N\ell))^{1/\lambda}}\\
    & = \sum_{\substack{h \in \ZZ\\ h\not\equiv 0\Mod{N}}} \sum_{\ell \in \ZZ} \frac{1}{(r_{\alpha, \gamma_1}(h) r_{\alpha, \gamma_1}(h+N\ell))^{1/\lambda}}-\sum_{\substack{h \in \ZZ\\ h\not\equiv 0\Mod{N}}}\frac{1}{(r_{\alpha, \gamma_1}(h))^{2/\lambda}}\\
    & = \sum_{\substack{j=-(N-1)/2\\ j\neq 0}}^{(N-1)/2}\sum_{k\in \ZZ} \sum_{\ell \in \ZZ} \frac{1}{(r_{\alpha, \gamma_1}(Nk+j) r_{\alpha, \gamma_1}(Nk+j+N\ell))^{1/\lambda}}-\sum_{\substack{h \in \ZZ\\ h\not\equiv 0\Mod{N}}}\frac{1}{(r_{\alpha, \gamma_1}(h))^{2/\lambda}}\\
    & = \sum_{\substack{j=-(N-1)/2\\ j\neq 0}}^{(N-1)/2}\left(\sum_{k\in \ZZ} \frac{1}{(r_{\alpha, \gamma_1}(Nk+j))^{1/\lambda}}\right)^2-\sum_{\substack{j=-(N-1)/2\\ j\neq 0}}^{(N-1)/2}\sum_{k \in \ZZ}\frac{1}{(r_{\alpha, \gamma_1}(Nk+j))^{2/\lambda}}\\
    & = \gamma_1^{2/\lambda}\sum_{\substack{j=-(N-1)/2\\ j\neq 0}}^{(N-1)/2}\left(\left(\sum_{k\in \ZZ} \frac{1}{|Nk+j|^{\alpha/\lambda}}\right)^2-\sum_{k \in \ZZ}\frac{1}{|Nk+j|^{2\alpha/\lambda}}\right)\\
    & \leq \gamma_1^{2/\lambda}\sum_{\substack{j=-(N-1)/2\\ j\neq 0}}^{(N-1)/2}\left(\left(\frac{1}{|j|^{\alpha/\lambda}}+\sum_{k\in \ZZ\setminus \{0\}} \frac{1}{|Nk|^{\alpha/\lambda}|1+j/(Nk)|^{\alpha/\lambda}}\right)^2-\frac{1}{|j|^{2\alpha/\lambda}}\right)\\
    & \leq \gamma_1^{2/\lambda}\sum_{\substack{j=-(N-1)/2\\ j\neq 0}}^{(N-1)/2}\left(\left(\frac{1}{|j|^{\alpha/\lambda}}+\sum_{k\in \ZZ\setminus \{0\}} \frac{2^{\alpha/\lambda}}{|Nk|^{\alpha/\lambda}}\right)^2-\frac{1}{|j|^{2\alpha/\lambda}}\right)\\
    & = \gamma_1^{2/\lambda}\sum_{\substack{j=-(N-1)/2\\ j\neq 0}}^{(N-1)/2}\left(\left(\frac{1}{|j|^{\alpha/\lambda}}+\frac{2^{\alpha/\lambda+1}\zeta(\alpha/\lambda)}{N^{\alpha/\lambda}}\right)^2-\frac{1}{|j|^{2\alpha/\lambda}}\right)\\
    & = \gamma_1^{2/\lambda}\sum_{\substack{j=-(N-1)/2\\ j\neq 0}}^{(N-1)/2}\left(\frac{2^{\alpha/\lambda+2}\zeta(\alpha/\lambda)}{|j|^{\alpha/\lambda}N^{\alpha/\lambda}}+\frac{2^{2\alpha/\lambda+2}(\zeta(\alpha/\lambda))^2}{N^{2\alpha/\lambda}}\right)\\
    & \leq \gamma_1^{2/\lambda}\left(\frac{2^{\alpha/\lambda+2}(\zeta(\alpha/\lambda))^2}{N^{\alpha/\lambda}}+\frac{2^{2\alpha/\lambda+2}(\zeta(\alpha/\lambda))^2}{N^{2\alpha/\lambda-1}}\right)\leq \frac{2^{2\alpha/\lambda+3}\gamma_1^{2/\lambda}(\zeta(\alpha/\lambda))^2}{N}.
\end{align*}
Therefore, we get
\begin{align*}
    \left(R_{N,1,\alpha,\bsgamma}(1)\right)^{1/\lambda} & \leq \frac{4\gamma_1^{1/\lambda}\zeta(\alpha/\lambda)}{N}+\frac{4\gamma_1^{2/\lambda}(\zeta(\alpha/\lambda))^2}{N^2}+\frac{2^{2\alpha/\lambda+3}\gamma_1^{2/\lambda}(\zeta(\alpha/\lambda))^2}{N}\\
    & \leq \frac{1}{N}\left( 4\gamma_1^{1/\lambda}\zeta(\alpha/\lambda)+(2+2^{4\alpha+3})\gamma_1^{2/\lambda}(\zeta(\alpha/\lambda))^2\right)\\
    & \leq \frac{1}{N}\left[-1+\left( 1+\sqrt{2+2^{4\alpha+3}}\gamma_1^{1/\lambda}\zeta(\alpha/\lambda)\right)^2\right] \\
    & \leq \frac{1}{(1-\tau)(N-1)}\left( 1+2^{2\alpha+2}\gamma_1^{1/\lambda}\zeta(\alpha/\lambda)\right)^2,
\end{align*}
which proves the case $s=1$.

For the induction step, let $\bsz_{s-1}$ be the $(s-1)$-dimensional vector drawn by the first $s-1$ steps of the algorithm. For any $z_s\in \{1,\ldots,N-1\}$, by separating the cases with $\ell_s=0$ and $\ell_s\neq 0$, we have
\begin{align*}
\left(R_{N,s,\alpha, \bsgamma}\left(\bsz_{s-1}, z_{s}\right)\right)^{2} & =\sum_{\bsh\in \ZZ^s}\sum_{\bsl \in P_{N,( \bsz_{s-1},z_s)}^{\perp}\setminus \{\bszero\}} \frac{1}{r^2_{\alpha, \bsgamma}(\bsh) r^2_{\alpha, \bsgamma}(\bsh+\bsl)}\\
& = \sum_{\bsh\in \ZZ^s} \sum_{\substack{\bsl \in P_{N,( \bsz_{s-1},z_s)}^{\perp}\setminus \{\bszero\}\\ \ell_s=0}} \frac{1}{r^2_{\alpha, \bsgamma}(\bsh) r^2_{\alpha, \bsgamma}(\bsh+\bsl)}\\
& \quad + \sum_{\bsh\in \ZZ^s} \sum_{\substack{\bsl \in P_{N,( \bsz_{s-1},z_s)}^{\perp}\setminus \{\bszero\}\\ \ell_s\neq 0}} \frac{1}{r^2_{\alpha, \bsgamma}(\bsh) r^2_{\alpha, \bsgamma}(\bsh+\bsl)}\\
&=\sum_{h_s\in \ZZ} \frac{1}{r^4_{\alpha,\gamma_s}(h_s)} \times \sum_{\bsh \in \ZZ^{s-1}} \sum_{\bsl \in P_{N, \bsz_{s-1}}^{\perp}\setminus \{\bszero\}} \frac{1}{r^2_{\alpha, \bsgamma}(\bsh)r^2_{\alpha, \bsgamma}(\bsh+\bsl)} \\
&\quad +\sum_{h_s\in \ZZ} \sum_{\ell_s \in \ZZ\setminus \{0\}} \frac{1}{r^2_{\alpha, \bsgamma}(h_s)r^2_{\alpha, \bsgamma}(h_s+\ell_s)} \\
& \quad \quad \quad \times \sum_{\bsh \in \ZZ^{s-1}} \sum_{ \substack{\bsl\in\ZZ^{s-1} \\ \bsl\cdot \bsz_{s-1}\equiv-\ell_s z_s \Mod{N}}} \frac{1}{r^2_{\alpha, \bsgamma}(\bsh) r^2_{\alpha, \bsgamma}(\bsh+\bsl)}  \\
&= \left(1+2\gamma_s^4\zeta(4\alpha)\right)\left(R_{N,s-1,\alpha,\bsgamma}(\bsz_{s-1})\right)^{2}+B_{N,s,\alpha,\bsgamma}(\bsz_{s-1},z_s),
\end{align*}
where we write
\begin{align*}
B_{N,s,\alpha,\bsgamma}(\bsz_{s-1},z_s) & = \sum_{h_s\in \ZZ} \sum_{\ell_s \in \ZZ\setminus \{0\}} \frac{1}{r^2_{\alpha, \bsgamma}(h_s)r^2_{\alpha, \bsgamma}(h_s+\ell_s)}\\
& \quad \quad \quad \times \sum_{\bsh \in \ZZ^{s-1}} \sum_{ \substack{\bsl\in\ZZ^{s-1} \\ \bsl\cdot \bsz_{s-1}\equiv-\ell_s z_s \Mod{N}}} \frac{1}{r^2_{\alpha, \bsgamma}(\bsh) r^2_{\alpha, \bsgamma}(\bsh+\bsl)}.
\end{align*}

Using Jensen's inequality \eqref{eq:jensen} and separating the cases $\ell_s\equiv 0\Mod{N}$ and $\ell_s\not\equiv 0\Mod{N}$ further, the average of $(B_{N,s,\alpha,\bsgamma}(\bsz_{s-1},z_s))^{1/(2\lambda)}$ for any $1/2\leq \lambda<\alpha$ over all $z_s \in \{ 1, \ldots,N-1\} $ is bounded as
\begin{align}\label{eq:average_of_B}
& \frac{1}{N-1}\sum_{z_s=1}^{N-1} (B_{N,s,\alpha,\bsgamma}(\bsz_{s-1},z_s))^{1/(2\lambda)} \notag \\
&\leq \frac{1}{N-1}\sum_{z_s=1}^{N-1}\sum_{h_s\in \ZZ} \sum_{\ell_s \in \ZZ\setminus \{0\}} \frac{1}{(r_{\alpha, \bsgamma}(h_s)r_{\alpha, \bsgamma}(h_s+\ell_s))^{1/\lambda}} \notag \\
& \quad \quad \quad \times \sum_{\bsh \in \ZZ^{s-1}} \sum_{ \substack{\bsl\in\ZZ^{s-1} \\ \bsl\cdot \bsz_{s-1}\equiv-\ell_s z_s \Mod{N}}} \frac{1}{(r_{\alpha, \bsgamma}(\bsh) r_{\alpha, \bsgamma}(\bsh+\bsl))^{1/\lambda}} \notag \\
& = \sum_{h_s\in \ZZ} \sum_{\substack{\ell_s \in \ZZ\setminus \{0\}\\ \ell_s\equiv 0 \Mod{N}}} \frac{1}{(r_{\alpha, \bsgamma}(h_s)r_{\alpha, \bsgamma}(h_s+\ell_s))^{1/\lambda}} \notag \\
& \quad \quad \quad \times \sum_{\bsh \in \ZZ^{s-1}} \sum_{ \substack{\bsl\in\ZZ^{s-1} \\ \bsl\cdot \bsz_{s-1}\equiv 0 \Mod{N}}} \frac{1}{(r_{\alpha, \bsgamma}(\bsh) r_{\alpha, \bsgamma}(\bsh+\bsl))^{1/\lambda}} \notag \\
& \quad + \frac{1}{N-1}\sum_{h_s\in \ZZ} \sum_{\substack{\ell_s \in \ZZ\setminus \{0\}\\ \ell_s\not\equiv 0 \Mod{N}}} \frac{1}{(r_{\alpha, \bsgamma}(h_s)r_{\alpha, \bsgamma}(h_s+\ell_s))^{1/\lambda}} \notag \\
& \quad \quad \quad \times \sum_{\bsh \in \ZZ^{s-1}}\sum_{ \substack{\bsl\in\ZZ^{s-1} \\ \bsl\cdot \bsz_{s-1}\not\equiv 0 \Mod{N}}} \frac{1}{(r_{\alpha, \bsgamma}(\bsh) r_{\alpha, \bsgamma}(\bsh+\bsl))^{1/\lambda}} \notag \\
& = \sum_{h_s\in \ZZ} \sum_{\substack{\ell_s \in \ZZ\setminus \{0\}\\ \ell_s\equiv 0 \Mod{N}}} \frac{1}{(r_{\alpha, \bsgamma}(h_s)r_{\alpha, \bsgamma}(h_s+\ell_s))^{1/\lambda}}\times T_{\bsz_{s-1},\lambda} \notag \\
& \quad + \frac{1}{N-1}\sum_{h_s\in \ZZ} \sum_{\substack{\ell_s \in \ZZ\setminus \{0\}\\ \ell_s\not\equiv 0 \Mod{N}}} \frac{1}{(r_{\alpha, \bsgamma}(h_s)r_{\alpha, \bsgamma}(h_s+\ell_s))^{1/\lambda}}\times (\tilde{T}_{\lambda}-T_{\bsz_{s-1},\lambda}) \notag \\
& = \sum_{h_s\in \ZZ} \sum_{\substack{\ell_s \in \ZZ\setminus \{0\}\\ \ell_s\equiv 0 \Mod{N}}} \frac{1}{(r_{\alpha, \bsgamma}(h_s)r_{\alpha, \bsgamma}(h_s+\ell_s))^{1/\lambda}}\times \frac{N T_{\bsz_{s-1},\lambda}-\tilde{T}_{\lambda}}{N-1} \notag \\
& \quad + \frac{1}{N-1}\sum_{h_s\in \ZZ} \sum_{\ell_s \in \ZZ\setminus \{0\}} \frac{1}{(r_{\alpha, \bsgamma}(h_s)r_{\alpha, \bsgamma}(h_s+\ell_s))^{1/\lambda}}\times (\tilde{T}_{\lambda}-T_{\bsz_{s-1},\lambda}),
\end{align} 
where we write
\begin{align*}
    T_{\bsz_{s-1},\lambda} & := \sum_{\bsh \in \ZZ^{s-1}} \sum_{ \substack{\bsl\in\ZZ^{s-1} \\ \bsl\cdot \bsz_{s-1}\equiv 0 \Mod{N}}} \frac{1}{(r_{\alpha, \bsgamma}(\bsh) r_{\alpha, \bsgamma}(\bsh+\bsl))^{1/\lambda}},\\
    \tilde{T}_{\lambda} & := \sum_{\bsh \in \ZZ^{s-1}}\sum_{ \bsl\in\ZZ^{s-1}} \frac{1}{(r_{\alpha, \bsgamma}(\bsh) r_{\alpha, \bsgamma}(\bsh+\bsl))^{1/\lambda}}=\prod_{j=1}^{s-1}\left( 1+2\gamma_j^{1/\lambda}\zeta(\alpha/\lambda)\right)^2.
\end{align*}

For the first term of \eqref{eq:average_of_B}, it can be inferred from the result for the case $s=1$ that
\begin{align*}
    \sum_{h_s\in \ZZ} \sum_{\substack{\ell_s \in \ZZ\setminus \{0\}\\ \ell_s\equiv 0 \Mod{N}}} \frac{1}{(r_{\alpha, \bsgamma}(h_s)r_{\alpha, \bsgamma}(h_s+\ell_s))^{1/\lambda}} \leq \frac{1}{N}\left( 4\gamma_s^{1/\lambda}\zeta(\alpha/\lambda)+(2+2^{4\alpha+3})\gamma_s^{2/\lambda}(\zeta(\alpha/\lambda))^2\right).
\end{align*}
For the second term of \eqref{eq:average_of_B}, we have
\begin{align*}
    & \sum_{h_s\in \ZZ} \sum_{\ell_s \in \ZZ\setminus \{0\}} \frac{1}{(r_{\alpha, \bsgamma}(h_s)r_{\alpha, \bsgamma}(h_s+\ell_s))^{1/\lambda}}\\
    & = \sum_{h_s\in \ZZ} \sum_{\ell_s \in \ZZ} \frac{1}{(r_{\alpha, \bsgamma}(h_s)r_{\alpha, \bsgamma}(h_s+\ell_s))^{1/\lambda}}-\sum_{h_s\in \ZZ}\frac{1}{(r_{\alpha, \bsgamma}(h_s))^{2/\lambda}}\\
    & = \left( 1+2\gamma_s^{1/\lambda}\zeta(\alpha/\lambda)\right)^2-\left( 1+2\gamma_s^{2/\lambda}\zeta(2\alpha/\lambda)\right)\\
    & \leq 4\gamma_s^{1/\lambda}\zeta(\alpha/\lambda)+4\gamma_s^{2/\lambda}(\zeta(\alpha/\lambda))^2.
\end{align*}
These bounds tell us
\begin{align*}
    & \frac{1}{N-1}\sum_{z_s=1}^{N-1} (B_{N,s,\alpha,\bsgamma}(\bsz_{s-1},z_s))^{1/(2\lambda)} \\
    & \leq \frac{1}{N}\left( 4\gamma_s^{1/\lambda}\zeta(\alpha/\lambda)+(2+2^{4\alpha+3})\gamma_s^{2/\lambda}(\zeta(\alpha/\lambda))^2\right) \frac{N T_{\bsz_{s-1},\lambda}-\tilde{T}_{\lambda}}{N-1} \\
    & \quad + \frac{1}{N-1}\left( 4\gamma_s^{1/\lambda}\zeta(\alpha/\lambda)+4\gamma_s^{2/\lambda}(\zeta(\alpha/\lambda))^2\right) (\tilde{T}_{\lambda}-T_{\bsz_{s-1},\lambda})\\
    & \leq \frac{\tilde{T}_{\lambda}}{N-1}\left(8\gamma_s^{1/\lambda}\zeta(\alpha/\lambda)+(6+2^{4\alpha+3})\gamma_s^{2/\lambda}(\zeta(\alpha/\lambda))^2\right).
\end{align*}

Let us temporarily regard $z_s$ as a random variable following the uniform distribution over the set $\{1,\ldots,N-1\}$. 
The above argument shows an upper bound on the expected value of $(B_{N,s,\alpha,\bsgamma}(\bsz_{s-1},z_s))^{1/(2\lambda)}$ taken with respect to $z_s$. 
By applying Markov's inequality, we conclude that the probability of the inequality
\begin{align}\label{eq:bound_markov}
    (B_{N,s,\alpha,\bsgamma}(\bsz_{s-1},z_s))^{1/(2\lambda)}\leq \frac{\tilde{T}_{\lambda}}{(1-\tau)(N-1)}\left(8\gamma_s^{1/\lambda}\zeta(\alpha/\lambda)+(6+2^{4\alpha+3})\gamma_s^{2/\lambda}(\zeta(\alpha/\lambda))^2\right)
\end{align}
being satisfied is at least $\tau$.
This, in turn, implies that there exist at least $\left\lceil \tau(N-1) \right\rceil$ components $z_s \in \{1,\ldots,N-1 \}$ such that the bound \eqref{eq:bound_markov} holds.

Thus, using the induction hypothesis for $\bsz_{s-1}$, for any $N \in \Pcal_M$ and $\bsz \in \Zcal_{N,d,\tau}$ drawn by Algorithm~\ref{alg:random_cbc}, we have
\begin{align*}
    & \left(R_{N,s,\alpha, \bsgamma}\left(\bsz_{s-1}, z_{s}\right)\right)^{1/\lambda}\\
    & \leq \left(1+2\gamma_s^4\zeta(4\alpha)\right)^{1/(2\lambda)}(R_{N,s-1,\alpha,\bsgamma}(\bsz_{s-1}))^{1/\lambda}+(B_{N,s,\alpha,\bsgamma}(\bsz_{s-1},z_s))^{1/(2\lambda)}\\
    & \leq \left(1+2^{1/(2\lambda)}\gamma_s^{2/\lambda}(\zeta(4\alpha))^{1/(2\lambda)}\right)(R_{N,s-1,\alpha,\bsgamma}(\bsz_{s-1}))^{1/\lambda}\\
    & \quad +\frac{\tilde{T}_{\lambda}}{(1-\tau)(N-1)}\left(8\gamma_s^{1/\lambda}\zeta(\alpha/\lambda)+(6+2^{4\alpha+3})\gamma_s^{2/\lambda}(\zeta(\alpha/\lambda))^2\right)\\
    & \leq \frac{1}{(1-\tau)(N-1)}\left(1+8\gamma_s^{1/\lambda}\zeta(\alpha/\lambda)+(8+2^{4\alpha+3})\gamma_s^{2/\lambda}(\zeta(\alpha/\lambda))^2\right) \prod_{j=1}^{s-1}\left( 1+2^{2\alpha+2}\gamma_j^{1/\lambda}\zeta(\alpha/\lambda)\right)^2\\
    & \leq \frac{1}{(1-\tau)(N-1)}\prod_{j=1}^{s}\left( 1+2^{2\alpha+2}\gamma_j^{1/\lambda}\zeta(\alpha/\lambda)\right)^2.
\end{align*}
This proves the result.
\end{proof}

%%%%%%%%%%%%%%%%%%%%%%%%%%%%%%%%%%%%%%%%%%
%%%%%%%%%%%%%%%%%%%%%%%%%%%%%%%%%%%%%%%%%%
\section{Randomized lattice-based approximation}\label{sec:upper}

%%%%%%%%%%%%%%%%%%%%%%%%%%%%%%%%%%%%%%%%%%
\subsection{Our algorithm}
In our randomized lattice-based approximation, we draw the number of points $N$ and the generating vector $\bsz$ randomly according to Algorithm~\ref{alg:random_cbc}, and also apply a random shift to estimate the Fourier coefficients $\hat{f}(\bsh)$ with $\bsh\in \Acal_d(T)$:
\[ A^{\rand}_{N,\bsz,\bsDelta,\Acal_d(T)}(f)(\bsx)= \sum_{\bsh \in \Acal_d(T)} \hat{f}_{N,\bsz,\bsDelta}(\bsh) \exp (2 \pi i \bsh \cdot \bsx),\]
where $N,\bsz,\bsDelta$ are the random variables and $\bsDelta$ follows the uniform distribution over $[0,1)^d$, and 
\[ \hat{f}_{N,\bsz,\bsDelta}(\bsh)=\frac{1}{N} \sum_{k=0}^{N-1} f\left( \left\{ \frac{k \bsz}{N}+\bsDelta \right\}\right)\exp(-2\pi i \bsh \cdot (k \bsz/N+\bsDelta)).\]
Instead of the deterministic worst-case $L_2$-approximation error, we employ the worst-case root mean squared $L_2$-approximation error:
\begin{align*}
    & e^{\rms\text{-}L_2\text{-}\app}_{d, \alpha, \bsgamma}(A^{\rand}_{N,\bsz,\bsDelta,\Acal_d(T)}) \\
    &  :=\sup_{\substack{ f\in H_{d, \alpha, \bsgamma} \\ \|f\|_{d, \alpha, \bsgamma} \leq 1}} \sqrt{\EE\left[\| f-A^{\rand}_{N,\bsz,\bsDelta,\Acal_d(T)}(f)\|^2_{L_2}\right] }. \\
    & \: =\sup_{ \substack{f \in H_{d,\alpha,\bsgamma} \\ \|f\|_{d,\alpha,\bsgamma} \leq 1}}\sqrt{\frac{1}{|\Pcal_M|}\sum_{N \in \Pcal_M} \frac{1}{| \Zcal_{N,d,\tau}|} \sum_{\bsz \in \Zcal_{N,d,\tau}} \int_{[0,1)^d}\| f-A^{\rand}_{N,\bsz,\bsDelta,\Acal_d(T)}(f)\|_{L_2} ^2\, \mathrm{d} \bsDelta}
\end{align*}
as an error criterion. 
In passing, we note that our randomized algorithm is biased; that is, we generally have $\EE[A^{\rand}_{N,\bsz,\bsDelta,\Acal_d(T)}(f)] \neq f$, due to the truncation of the set of Fourier coefficients.

\begin{remark}
    Although our randomized method is based on Algorithm~\ref{alg:random_cbc}, it can be substituted with Algorithm~2.4 in \cite{dick2022component}.
    The difference lies in the error criterion being minimized.
    Although we omit a detailed discussion in this paper, the randomized method based on Algorithm~2.4 in \cite{dick2022component} leads to an upper bound of the same order as the one we prove in Theorem~4.5 and Corollary~4.6; namely, the $M^{-\alpha(2\alpha+1)/(4\alpha+1)+\varepsilon}$ decay of the worst-case RMSE. The major advantage of using Algorithm~\ref{alg:random_cbc} lies in its superior performance in the deterministic setting: for each realization of the number of points $N$ and the generating vector $\bsz$, the corresponding deterministic, single rank-1 lattice-based approximation algorithm yields a better convergence rate for the worst-case $L_2$-approximation error. The difference in convergence rates between the two deterministic lattice-based methods is discussed in detail in \cite[Chapter~13]{dick2022lattice}, particularly in the paragraphs titled ``A direct approach'' and ``Improving the convergence rate.'' Consequently, our randomized method based on Algorithm~\ref{alg:random_cbc} is expected to exhibit more stable and reliable approximation behavior.
\end{remark}

%%%%%%%%%%%%%%%%%%%%%%%%%%%%%%%%%%%%%%%%%%
\subsection{Error Analysis}
In what follows, for $\bsl \in \ZZ^d$, we write
\begin{align*}
\omega(\bsl) := \frac{1}{|\Pcal_M|} \sum_{N \in \Pcal_M} \frac{1}{|\Zcal_{N,d,\tau}|} \sum_{\bsz \in \Zcal_{N,d,\tau}} \II_{\bsl \in P_{N,\bsz}^{\perp}},
\end{align*}
where $\II_{\bsl \in P_{N,\bsz}^{\perp}}$ denotes the indicator function that returns $1$ if $\bsl \in P_{N,\bsz}^{\perp}$ holds and $0$ otherwise. 
Note that $\omega(\bsl)$ is nothing but the probability that a given frequency $\bsl$ belongs to the dual lattice $P_{N,\bsz}^{\perp}$ when $\bsz$ follows the uniform distribution over $\Zcal_{N,d,\tau}$ given $N$, and $N$ follows the uniform distribution over $\Pcal_M$. Note that the size of $\Zcal_{N,d,\tau}$ can be bounded below as
\[ |\Zcal_{N,d,\tau}| = 1\times \left( \lceil \tau (N-1) \rceil \right)^{d-1} \geq \tau^{d-1} (N-1)^{d-1}. \]

We show some lemmas first.
\begin{lemma}\label{lem:random_error_bound}
    It holds that
    \[ \left(e^{\rms\text{-}L_2\text{-}\app}_{d, \alpha, \bsgamma}(A^{\rand}_{N,\bsz,\bsDelta,\Acal_d(T)})\right)^2\leq \sup_{\bsh\notin \mathcal{A}_d(T)}\frac{1}{r^2_{\alpha, \bsgamma}(\bsh)}+\sum_{\bsh\in \Acal_d(T)}\sup_{\bsl \in \ZZ^d\setminus \{\bszero\}}\frac{\omega(\bsl)}{r^2_{\alpha, \bsgamma}(\bsh+\bsl)}.\]
\end{lemma}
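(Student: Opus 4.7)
The plan is to split the squared $L_2$-error in \eqref{eq:rms_error} into a truncation part and an aliasing part via Parseval's identity, then exploit the random shift $\bsDelta$ to decorrelate the aliasing errors, and finally average over $N$ and $\bsz$.

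First I would use Parseval's identity. Writing $f(\bsx) = \sum_{\bsh \in \ZZ^d} \hat{f}(\bsh)\exp(2\pi i \bsh \cdot \bsx)$ and $A^{\rand}_{N,\bsz,\bsDelta,\Acal_d(T)}(f)(\bsx) = \sum_{\bsh \in \Acal_d(T)} \hat{f}_{N,\bsz,\bsDelta}(\bsh)\exp(2\pi i \bsh \cdot \bsx)$, the $L_2$-norm splits as
\begin{align*}
\|f - A^{\rand}_{N,\bsz,\bsDelta,\Acal_d(T)}(f)\|_{L_2}^2 = \sum_{\bsh \notin \Acal_d(T)} |\hat{f}(\bsh)|^2 + \sum_{\bsh \in \Acal_d(T)} |\hat{f}(\bsh) - \hat{f}_{N,\bsz,\bsDelta}(\bsh)|^2.
\end{align*}
For the truncation part, I multiply and divide by $r^2_{\alpha,\bsgamma}(\bsh)$ and bound the quantity $\tfrac{1}{r^2_{\alpha,\bsgamma}(\bsh)}$ by its supremum over $\bsh \notin \Acal_d(T)$, leaving a factor $\sum_{\bsh\notin \Acal_d(T)} r^2_{\alpha,\bsgamma}(\bsh)|\hat{f}(\bsh)|^2 \leq \|f\|_{d,\alpha,\bsgamma}^2$.

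Next, I would compute $\hat{f}_{N,\bsz,\bsDelta}(\bsh)$ explicitly by substituting the Fourier expansion of $f$ into the defining sum and applying the character property (Lemma~\ref{lem:character}) to obtain
\begin{align*}
\hat{f}_{N,\bsz,\bsDelta}(\bsh) - \hat{f}(\bsh) = \sum_{\bsl \in P_{N,\bsz}^{\perp} \setminus \{\bszero\}} \hat{f}(\bsh + \bsl) \exp(2\pi i \bsl \cdot \bsDelta).
\end{align*}
The key step is then to integrate the squared modulus over $\bsDelta \in [0,1)^d$; because the functions $\{\exp(2\pi i \bsl \cdot \bsDelta)\}_{\bsl}$ are orthonormal in $L_2([0,1)^d)$, the cross terms vanish and
\begin{align*}
\int_{[0,1)^d} |\hat{f}_{N,\bsz,\bsDelta}(\bsh) - \hat{f}(\bsh)|^2 \rd \bsDelta = \sum_{\bsl \in P_{N,\bsz}^{\perp} \setminus \{\bszero\}} |\hat{f}(\bsh + \bsl)|^2.
\end{align*}

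Averaging this over $\bsz \in \Zcal_{N,d,\tau}$ and $N \in \Pcal_M$ and recognising the indicator function produces $\sum_{\bsl \in \ZZ^d \setminus \{\bszero\}} \omega(\bsl)|\hat{f}(\bsh + \bsl)|^2$ by the definition of $\omega(\bsl)$. To conclude, I write
\begin{align*}
\sum_{\bsl \in \ZZ^d \setminus \{\bszero\}} \omega(\bsl)|\hat{f}(\bsh + \bsl)|^2 \leq \sup_{\bsl \in \ZZ^d \setminus \{\bszero\}} \frac{\omega(\bsl)}{r^2_{\alpha,\bsgamma}(\bsh+\bsl)} \sum_{\bsl \in \ZZ^d \setminus \{\bszero\}} r^2_{\alpha,\bsgamma}(\bsh+\bsl)|\hat{f}(\bsh + \bsl)|^2,
\end{align*}
bound the rightmost sum by $\|f\|_{d,\alpha,\bsgamma}^2$, sum over $\bsh \in \Acal_d(T)$, combine with the truncation bound, and take the supremum over $\|f\|_{d,\alpha,\bsgamma} \leq 1$. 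There is no real obstacle here: the argument is mostly bookkeeping, with the essential idea being that the random shift transforms the deterministic quadrature error into an unbiased aliasing sum whose squared expectation admits a clean Parseval-type expression, so that the dual lattice appears only through the averaged indicator $\omega(\bsl)$. The only point requiring mild care is the interchange of summations (justified by absolute convergence guaranteed by $\alpha > 1/2$ and $f \in H_{d,\alpha,\bsgamma}$) when passing from the Fourier expansion of $f$ through the character property.
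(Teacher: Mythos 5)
Your proposal is correct and follows essentially the same route as the paper's proof: a Parseval split into truncation and aliasing parts, the character property to express the coefficient error as a dual-lattice aliasing sum modulated by $\exp(2\pi i\bsl\cdot\bsDelta)$, orthogonality in $\bsDelta$ to kill the cross terms, averaging over $N$ and $\bsz$ to produce $\omega(\bsl)$, and the final H\"older-type bounds against $\|f\|_{d,\alpha,\bsgamma}^2$. No gaps to report.
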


\begin{proof}
    Let us consider the Fourier series of $f\in H_{d,\alpha,\bsgamma}$. For fixed $N$ and $\bsz$, the orthonormality of the Fourier system leads to
    \begin{align*}
        & \int_{[0,1)^d}\| f-A^{\rand}_{N,\bsz,\bsDelta,\Acal_d(T)}(f)\|_{L_2} ^2\, \mathrm{d} \bsDelta \\
        & = \int_{[0,1)^d}\int_{[0,1)^d} \left(f(\bsx)-A^{\rand}_{N,\bsz,\bsDelta,\Acal_d(T)}(f)(\bsx)\right)^2\, \mathrm{d} \bsx\, \mathrm{d} \bsDelta\\
        &=\int_{[0,1)^d}\int_{[0,1)^d} \left(\sum_{\bsh\notin \mathcal{A}_d(T)} \hat{f}(\bsh)\exp(2\pi i \bsh \cdot \bsx)\right.\\
        & \qquad \qquad \qquad \left. + \sum_{\bsh\in \Acal_d(T)}\left( \hat{f}(\bsh)-\hat{f}_{N,\bsz,\bsDelta}(\boldsymbol{h})\right)\exp(2\pi i \boldsymbol{h}\cdot \bsx)\right)^2\, \mathrm{d} \bsx\, \mathrm{d} \bsDelta\\
        & = \int_{[0,1)^d}\left(\sum_{\bsh\notin \mathcal{A}_d(T)} |\hat{f}(\bsh)|^2+\sum_{\bsh\in \Acal_d(T)}|\hat{f}(\bsh)-\hat{f}_{N,\bsz,\bsDelta}(\bsh)|^2\right)\, \mathrm{d} \bsDelta\\
        & = \sum_{\bsh\notin \mathcal{A}_d(T)} |\hat{f}(\bsh)|^2+\sum_{\bsh\in \Acal_d(T)}\int_{[0,1)^d}|\hat{f}(\bsh)-\hat{f}_{N,\bsz,\bsDelta}(\bsh)|^2\,  \mathrm{d} \bsDelta.
    \end{align*}
     For the difference between the true Fourier coefficient and its estimation by the shifted lattice rule $ \hat{f}(\boldsymbol{h})-\hat{f}_{N,\boldsymbol{z},\bsDelta}(\boldsymbol{h})$ for each $\bsh\in \Acal_d(T)$, using the pointwise representation of $f$ by the Fourier series again and Lemma~\ref{lem:character}, we have
    \begin{align*}
        \hat{f}(\boldsymbol{h})-\hat{f}_{N,\bsz,\bsDelta}(\bsh) &=\hat{f}(\boldsymbol{h})-\frac{1}{N}\sum_{n=0}^{N-1} \left(\sum_{\boldsymbol{\ell}\in \mathbb{Z}^d} \hat{f}(\bsl)\exp(2\pi i \bsl\cdot (\bsx_n+\bsDelta)) \right)\exp(-2\pi i \bsh\cdot(\bsx_n+\bsDelta))\\
        &=\hat{f}(\bsh)-\sum_{\bsl\in \ZZ^d} \hat{f}(\bsl) \exp(2\pi i (\bsl-\bsh)\cdot\bsDelta)) \left(\frac{1}{N}\sum_{n=0}^{N-1} \exp(2\pi i (\bsl-\bsh )\cdot \bsx_n) \right) \\
        &=\hat{f}(\bsh)-\sum_{\substack{\bsl\in \ZZ^d \\ \bsl-\bsh\in P_{N,\bsz}^\perp}} \hat{f}(\bsl)\exp( 2\pi i(\bsl-\bsh)\cdot\bsDelta)\\
        &=\sum_{\substack{\bsl\in \ZZ^d \setminus \{\bsh \}\\ \bsl-\bsh\in P_{N,\bsz}^\perp}} \hat{f}(\bsl)\exp( 2\pi i(\bsl-\bsh)\cdot\bsDelta) \\
        &=\sum_{\bsl \in P_{N,\boldsymbol{z}}^\perp\setminus \{\bszero\}} \hat{f}(\bsl+\bsh)\exp( 2\pi i \bsl\cdot\bsDelta).
    \end{align*}
    Due to the orthonormality of the Fourier system, we have
    \begin{align*}
        & \int_{[0,1)^d}\| f-A^{\rand}_{N,\bsz,\bsDelta,\Acal_d(T)}(f)\|_{L_2} ^2\, \mathrm{d} \bsDelta \\
        & = \sum_{\bsh\notin \mathcal{A}_d(T)} |\hat{f}(\bsh)|^2+\sum_{\bsh\in \Acal_d(T)}\int_{[0,1)^d}\left|\sum_{\bsl \in P_{N,\boldsymbol{z}}^\perp\setminus \{\bszero\}} \hat{f}(\bsl+\bsh)\exp( 2\pi i \bsl\cdot\bsDelta)\right|^2\,  \mathrm{d} \bsDelta\\
        & = \sum_{\bsh\notin \mathcal{A}_d(T)} |\hat{f}(\bsh)|^2+\sum_{\bsh\in \Acal_d(T)}\sum_{\bsl \in P_{N,\boldsymbol{z}}^\perp\setminus \{\bszero\}}|\hat{f}(\bsl+\bsh)|^2.
    \end{align*}

    Therefore, for any $f\in H_{d,\alpha,\bsgamma}$, it holds that
    \begin{align*}
        & \frac{1}{|\Pcal_M|}\sum_{N \in \Pcal_M} \frac{1}{| \Zcal_{N,d,\tau}|} \sum_{\bsz \in \Zcal_{N,d,\tau}} \int_{[0,1)^d}\| f-A^{\rand}_{N,\bsz,\bsDelta,\Acal_d(T)}(f)\|_{L_2} ^2\, \mathrm{d} \bsDelta \\
       & = \frac{1}{|\Pcal_M|}\sum_{N \in \Pcal_M} \frac{1}{| \Zcal_{N,d,\tau}|} \sum_{\bsz \in \Zcal_{N,d,\tau}} \left( \sum_{\bsh\notin \Acal_d(T)} |\hat{f}(\bsh)|^2+\sum_{\bsh\in \Acal_d(T)}\sum_{\bsl \in P_{N,\boldsymbol{z}}^\perp\setminus \{\bszero\}}|\hat{f}(\bsl+\bsh)|^2\right)\\
       & = \sum_{\bsh\notin \Acal_d(T)} |\hat{f}(\bsh)|^2+\sum_{\bsh\in \Acal_d(T)}\sum_{\bsl \in \ZZ^d\setminus \{\bszero\}}|\hat{f}(\bsl+\bsh)|^2 \times \frac{1}{|\Pcal_M|} \sum_{N \in \Pcal_M} \frac{1}{|\Zcal_{N,d,\tau}|} \sum_{\bsz \in \Zcal_{N,d,\tau}} \II_{\bsl \in P_{N,\bsz}^{\perp}}\\
       & = \sum_{\bsh\notin\Acal_d(T)} |\hat{f}(\bsh)|^2+\sum_{\bsh\in \Acal_d(T)}\sum_{\bsl \in \ZZ^d\setminus \{\bszero\}}|\hat{f}(\bsl+\bsh)|^2\omega(\bsl)\\
       & \leq \left(\sum_{\bsh\notin \mathcal{A}_d(T)} |\hat{f}(\bsh)|^2r^2_{\alpha, \bsgamma}(\bsh)\right) \sup_{\bsh\notin \Acal_d(T)}\frac{1}{r^2_{\alpha, \bsgamma}(\bsh)}\\
       & \quad +\sum_{\bsh\in \Acal_d(T)}\left(\sum_{\bsl \in \ZZ^d\setminus \{\bszero\}}|\hat{f}(\bsl+\bsh)|^2r^2_{\alpha, \bsgamma}(\bsl+\bsh)\right) \sup_{\bsl \in \ZZ^d\setminus \{\bszero\}}\frac{\omega(\bsl)}{r^2_{\alpha, \bsgamma}(\bsl+\bsh)}\\
       & \leq \|f\|_{d,\alpha,\bsgamma}^2\left( \sup_{\bsh\notin \mathcal{A}_d(T)}\frac{1}{r^2_{\alpha, \bsgamma}(\bsh)}+\sum_{\bsh\in \Acal_d(T)}\sup_{\bsl \in \ZZ^d\setminus \{\bszero\}}\frac{\omega(\bsl)}{r^2_{\alpha, \bsgamma}(\bsh+\bsl)}\right).
    \end{align*}
    Thus we are done.
\end{proof}

Let us define 
\begin{equation}
    H_M:=\inf_{1/2\leq \lambda< \alpha} \left(\frac{2}{(1-\tau)M}\prod_{j=1}^{d}\left( 1+2^{2\alpha+2}\gamma_j^{1/\lambda}\zeta(\alpha/\lambda)\right)^2\right)^{\lambda}.\label{def:H_M}
\end{equation}
It follows from Theorem~\ref{thm:worst-case_bound} that, for any $N$ and $\bsz$ generated by Algorithm~\ref{alg:random_cbc}, we have $R_{N,s,\alpha, \bsgamma}(\bsz) \leq H_M$.
For the rest of this paper, we assume that 
\begin{align}\label{eq:assumption_number_points}
     M\geq \inf_{1/2\leq \lambda< \alpha}\frac{2}{1-\tau}\prod_{j=1}^{d}\left( 1+2^{2\alpha+2}\gamma_j^{1/\lambda}\zeta(\alpha/\lambda)\right)^2
\end{align}
holds so that $H_M\leq 1$.

\begin{remark}
	Because the number of quadrature points $N$ needs to be at least as large as $M/2$ (see \eqref{def:P_M}), it is important to discuss how the lower bound on $M$, given in \eqref{eq:assumption_number_points}, depends on $d$. It is clear that if $\gamma_1=\gamma_2=\cdots=c$ for some $0<c\leq 1$, it grows exponentially fast with $d$. Therefore, the more interesting case arises when $\gamma_j$ exhibits some decay. Using the elementary inequality $1+x\leq \exp(x)$, we obtain
    \begin{align*}
        \prod_{j=1}^{d}\left( 1+2^{2\alpha+2}\gamma_j^{1/\lambda}\zeta(\alpha/\lambda)\right) & \leq \exp\left( 2^{2\alpha+2}\zeta(\alpha/\lambda)\sum_{j=1}^{d}\gamma_j^{1/\lambda}\right)\\
        & \leq \exp\left( 2^{2\alpha+2}\zeta(\alpha/\lambda)\sum_{j=1}^{d}\gamma_j^{1/\alpha}\right).
    \end{align*}
    This implies that if $\sum_{j=1}^{\infty}\gamma_j^{1/\alpha}<\infty$, the lower bound on $M$ is bounded above independently of $d$. Furthermore, if $\lim\sup_{d\to \infty}\frac{1}{\log d}\sum_{j=1}^{d}\gamma_j^{1/\alpha}<\infty$, the lower bound on $M$ grows only polynomially with $d$. 
\end{remark}

\begin{lemma}\label{lem:bound_on_omega}
   Let $M, d \in \NN, \alpha > 1/2, \bsgamma \in[0,1]^{\NN}$ and $\tau \in(0,1)$ with $M\geq 4$ be given. Assume that \eqref{eq:assumption_number_points} holds. Then the following holds:
   \begin{enumerate}
        \item There exists a constant $c>0$ such that, for all $\bsl \in \ZZ^d \setminus \{\bszero\}$ we have
        \[ \omega(\bsl) \leq c \frac{\log \left(1+\|\bsl\|_{\infty}\right)}{\tau M}, \]
        where $\|\bsl\|_{\infty}=\max_j|\ell_j|$ denotes the maximum norm.
        \item If $\bsl \in \ZZ^d \setminus \{\bszero\}$ satisfies        \begin{align}\label{eq:condition_on_omega_zero}
        \sum_{\bsh\in \ZZ^d} \frac{1}{r^2_{\alpha, \bsgamma}(\bsh) r^2_{\alpha, \bsgamma}(\bsh+\bsl)} > H_M^2,
        \end{align}
        then $\omega(\bsl)=0$. Here, $H_M$ is defined in \eqref{def:H_M}.
   \end{enumerate}
\end{lemma}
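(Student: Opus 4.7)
The plan is to treat the two parts in opposite order: part~(ii) is a short corollary of Theorem~\ref{thm:worst-case_bound}, while part~(i) requires a counting argument exploiting the sequential random choice in Algorithm~\ref{alg:random_cbc}.

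For part~(ii), I would isolate the frequency $\bsl$ inside the defining sum for $R_{N,d,\alpha,\bsgamma}(\bsz)$. If $\bsl$ were to lie in $P_{N,\bsz}^\perp$ for some realization $(N,\bsz)$, then restricting the outer sum to that single term would give
\begin{align*}
[R_{N,d,\alpha,\bsgamma}(\bsz)]^2 \geq \sum_{\bsh\in \ZZ^d}\frac{1}{r_{\alpha,\bsgamma}^2(\bsh)\, r_{\alpha,\bsgamma}^2(\bsh+\bsl)}.
\end{align*}
Since $N>\lceil M/2\rceil$ implies $N-1\geq M/2$ for every $N\in \Pcal_M$, Theorem~\ref{thm:worst-case_bound} combined with the definition~\eqref{def:H_M} of $H_M$ yields $R_{N,d,\alpha,\bsgamma}(\bsz)\leq H_M$. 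Hypothesis~\eqref{eq:condition_on_omega_zero} then produces a contradiction, so $\bsl\notin P_{N,\bsz}^\perp$ for every admissible pair and hence $\omega(\bsl)=0$.

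For part~(i), I would write $\omega(\bsl)$ as the probability over the joint law of $(N,\bsz)$ produced by the algorithm and split the primes $N\in \Pcal_M$ according to whether $N$ divides $g:=\gcd\{|\ell_j|:\ell_j\neq 0\}$. Any prime divisor of $g$ lying in $\Pcal_M$ exceeds $M/2$, so the number of such ``bad'' primes is at most $\log g/\log(M/2)\leq \log \|\bsl\|_\infty/\log(M/2)$; for these $N$ I bound the conditional probability trivially by one. For the remaining primes with $N\nmid g$ I choose the largest index $s$ with $\ell_s\neq 0$ and $N\nmid \ell_s$. Because $N\mid \ell_j$ for every larger $j$ with $\ell_j\neq 0$, the event $\bsl\cdot \bsz\equiv 0\Mod{N}$ reduces to the single linear congruence $\ell_s z_s\equiv -\sum_{j<s}\ell_j z_j\Mod{N}$, whose unique solution modulo $N$ is determined by $\bsz_{s-1}$. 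Since Algorithm~\ref{alg:random_cbc} draws $z_s$ uniformly from a set $Z_s(\bsz_{s-1})\subseteq\{1,\ldots,N-1\}$ of cardinality $\lceil\tau(N-1)\rceil$, the tower property yields the conditional probability bound $1/\lceil\tau(N-1)\rceil\leq 2/(\tau M)$; the degenerate case $s=1$ renders the event impossible. Averaging over $\Pcal_M$ via $|\Pcal_M|\geq cM/\log M$ and the elementary estimate $\log M/\log(M/2)\leq 2$ for $M\geq 4$ then gives
\begin{align*}
\omega(\bsl)\leq \frac{\log \|\bsl\|_\infty}{|\Pcal_M|\log(M/2)}+\frac{2}{\tau M} \leq \frac{C\log(1+\|\bsl\|_\infty)}{\tau M}
\end{align*}
for an absolute constant $C$, after absorbing the boundary case $\|\bsl\|_\infty=1$ into the additive term using $\log 2>0$.

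The main obstacle I foresee is the bad-prime bookkeeping in part~(i): to arrive at the stated rate without a spurious $\log M$ factor, one must count prime divisors of $g$ in the window $(M/2,M]$ via the sharper bound $\log g/\log(M/2)$ rather than the crude $\log_2 g$, so that dividing by $|\Pcal_M|\asymp M/\log M$ cancels the logarithm and leaves only $\log(1+\|\bsl\|_\infty)/(\tau M)$. Once this counting is in place, the reduction of $\bsl\cdot\bsz\Mod{N}$ to a single linear congruence in $z_s$, together with the tower-property conditioning on $\bsz_{s-1}$, supplies the probabilistic half of the bound in a transparent way.
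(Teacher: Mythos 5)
Your proposal is correct, but it is split unevenly against the paper: for part~(ii) you argue exactly as the paper does --- since all terms in $[R_{N,d,\alpha,\bsgamma}(\bsz)]^2$ are nonnegative, a single $\bsl\in P_{N,\bsz}^{\perp}$ already contributes $\sum_{\bsh}1/(r^2_{\alpha,\bsgamma}(\bsh)r^2_{\alpha,\bsgamma}(\bsh+\bsl))$, which contradicts $R_{N,d,\alpha,\bsgamma}(\bsz)\le H_M$ (Theorem~\ref{thm:worst-case_bound} together with $N-1\ge M/2$) whenever \eqref{eq:condition_on_omega_zero} holds. For part~(i), however, the paper does not give a proof at all: it simply cites Theorem~3.1 of Dick, Goda, and Suzuki \cite{dick2022component}, whereas you supply a self-contained argument. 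Your argument is sound and essentially reconstructs the cited result: the identification of $\omega(\bsl)$ with the algorithm's law is legitimate because every output vector corresponds to a unique path with the same branching cardinality $\lceil\tau(N-1)\rceil$ at each stage (the paper asserts this identification explicitly); the ``bad'' primes $N\in\Pcal_M$ dividing $g=\gcd\{|\ell_j|:\ell_j\neq 0\}$ all exceed $M/2$, so there are at most $\log g/\log(M/2)\le \log\|\bsl\|_\infty/\log(M/2)$ of them, and dividing by $|\Pcal_M|\ge cM/\log M$ with $\log M/\log(M/2)\le 2$ for $M\ge 4$ removes the spurious logarithm in $M$; for the remaining primes, conditioning on $\bsz_{s-1}$ with $s$ the largest index having $N\nmid\ell_s$ reduces the event to a single admissible value of $z_s$ in the uniform set $Z_s$ of size $\lceil\tau(N-1)\rceil\ge \tau M/2$, giving the conditional bound $2/(\tau M)$, with the case $s=1$ impossible since $z_1=1$. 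What your route buys is a transparent, reproducible proof of the probability bound in place of an external citation; what the paper's route buys is brevity and consistency with the integration setting of \cite{dick2022component}, where the same estimate was first established. Neither part actually needs assumption \eqref{eq:assumption_number_points}, which you correctly do not invoke.
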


\begin{proof}
    Since the first assertion was proven in \cite[Theorem~3.1]{dick2022component}, we only prove the second assertion. It follows from Theorem~\ref{thm:worst-case_bound} that, for any $N \in \Pcal_M$ and $\bsz \in \Zcal_{N,d,\tau}$ drawn by Algorithm~\ref{alg:random_cbc}, we have 
    \begin{align*}
        (R_{N,s,\alpha, \bsgamma}(\bsz))^2 = \sum_{\bsl \in P^{\perp}_{N,\bsz} \setminus \{\bszero\}} \left(\sum_{ \bsh\in \ZZ^d}\frac{1}{r^2_{\alpha,\bsgamma}(\bsh)r^2_{\alpha,\bsgamma}(\bsh+\bsl)}\right)\leq H_M^2.
    \end{align*}
    This means that, for any $\bsl \in \ZZ^d \setminus \{\bszero\}$ with \eqref{eq:condition_on_omega_zero}, it holds that $\bsl \notin P_{N,\bsz}^{\perp}$ for all $N \in \Pcal_M$ and all $\bsz \in \Zcal_{N,d,\tau}.$ This proves $\omega(\bsl)=0$ when the condition \eqref{eq:condition_on_omega_zero} holds.
\end{proof}

As one of the main results of this paper, we prove the following theorem.

\begin{theorem}\label{thm:rms_error_bound}
Let $M, d \in \NN, \alpha > 1/2, \bsgamma \in[0,1]^{\NN}$ and $\tau \in(0,1)$ with $M\geq 4$ be given. Assume that \eqref{eq:assumption_number_points} holds. We have
\begin{align*}
& \left(e^{\rms\text{-}L_2\text{-}\app}_{d, \alpha, \bsgamma}(A^{\rand}_{N,\bsz,\bsDelta,\Acal_d(T)})\right)^2\\
& \leq \frac{1}{T}+ C_{\alpha,\beta,\lambda,\tau}\frac{T^{1+1/(2\lambda)}}{M^{2\lambda-\lambda \beta/\alpha+1}}\prod_{j=1}^{d}\left( 1+2^{2\alpha+2}\gamma_j^{1/\lambda}\zeta(\alpha/\lambda)\right)^{4\lambda-2\lambda\beta/\alpha}\max(1,2^{\beta} \gamma_j^{\beta/\alpha}),
\end{align*}
for any $\lambda\in [1/2,\alpha)$ and $\beta\in (0,1]$, with a constant $C_{\alpha,\beta,\lambda,\tau}>0$ independent of $M,d,\bsgamma,T$.
\end{theorem}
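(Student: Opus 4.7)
The plan is to begin with Lemma~\ref{lem:random_error_bound}, which bounds the squared RMSE by
\[\sup_{\bsh \notin \Acal_d(T)}\frac{1}{r^2_{\alpha,\bsgamma}(\bsh)} + \sum_{\bsh \in \Acal_d(T)} \sup_{\bsl \in \ZZ^d \setminus \{\bszero\}}\frac{\omega(\bsl)}{r^2_{\alpha,\bsgamma}(\bsh+\bsl)}.\]
The first term is at most $1/T$ by definition of $\Acal_d(T)$, producing the $1/T$ summand. For the second, using $r^2_{\alpha,\bsgamma}(\bsh) \leq T$ on $\Acal_d(T)$ gives $1 \leq T/r^2_{\alpha,\bsgamma}(\bsh)$; dominating $\sup_\bsl$ by $\sum_\bsl$ (valid as all terms are non-negative), extending the outer sum to all of $\ZZ^d$, and swapping summation, I reduce matters to bounding
\[T \sum_{\bsl \in \ZZ^d \setminus \{\bszero\}} \omega(\bsl)\, U(\bsl), \qquad U(\bsl) := \sum_{\bsh \in \ZZ^d} \frac{1}{r^2_{\alpha,\bsgamma}(\bsh)\, r^2_{\alpha,\bsgamma}(\bsh+\bsl)}.\]

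The heart of the argument combines both parts of Lemma~\ref{lem:bound_on_omega}: the pointwise bound $\omega(\bsl) \leq c \log(1+\|\bsl\|_\infty)/(\tau M)$ and the support constraint that $\omega(\bsl) = 0$ whenever $U(\bsl) > H_M^2$. I would use the interpolation $\omega(\bsl) U(\bsl) = \omega(\bsl)^{1/2} \cdot \bigl[\omega(\bsl)^{1/2} U(\bsl)^{1-\beta/(2\alpha)}\bigr] \cdot U(\bsl)^{\beta/(2\alpha)}$. On the support of $\omega$ the middle bracket is at most $H_M^{2-\beta/\alpha}$ (since $\omega^{1/2} \leq 1$, $U \leq H_M^2$, and $1-\beta/(2\alpha) \in [0,1]$ thanks to $\beta < 2\alpha$), while the leading $\omega(\bsl)^{1/2}$ is controlled by the log-bound. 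Summing gives
\[\sum_{\bsl \in \ZZ^d \setminus \{\bszero\}} \omega(\bsl) U(\bsl) \leq \sqrt{\tfrac{c}{\tau M}}\, H_M^{2-\beta/\alpha} \sum_{\bsl \in \ZZ^d \setminus \{\bszero\}} \bigl(\log(1+\|\bsl\|_\infty)\bigr)^{1/2}\, U(\bsl)^{\beta/(2\alpha)}.\]
Substituting the bound on $H_M$ from Theorem~\ref{thm:worst-case_bound} already accounts for the $M^{-(2\lambda-\lambda\beta/\alpha+1/2)}$ decay and the $\prod_j(1+2^{2\alpha+2}\gamma_j^{1/\lambda}\zeta(\alpha/\lambda))^{4\lambda-2\lambda\beta/\alpha}$ factor, so what remains is to bound the last $\bsl$-sum by a constant multiple of $\prod_j \max(1, 2^\beta \gamma_j^{\beta/\alpha})$.

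For this final step I would exploit the tensor-product structure $U(\bsl) = \prod_j F_j(\ell_j)$ with the explicit one-dimensional convolution $F_j(\ell_j) := \sum_{h_j \in \ZZ} 1/(r^2_{\alpha,\gamma_j}(h_j) r^2_{\alpha,\gamma_j}(h_j+\ell_j))$, together with the subadditivity $(\log(1+\|\bsl\|_\infty))^{1/2} \leq \sum_j (\log(1+|\ell_j|))^{1/2}$, converting the multidimensional sum into a sum over one distinguished coordinate of products of one-dimensional sums. A per-coordinate application of Jensen's inequality on $F_j(\ell_j)^{\beta/(2\alpha)}$, together with the absorption $\log(1+x) \leq x^\varepsilon/\varepsilon$ for arbitrarily small $\varepsilon > 0$, reduces the analysis to one-dimensional tail estimates; a case distinction $\ell_j = 0$ versus $\ell_j \neq 0$ then delivers the per-coordinate factor $\max(1, 2^\beta \gamma_j^{\beta/\alpha})$. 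This last step is the main obstacle, since a naive Jensen bound on $U(\bsl)^{\beta/(2\alpha)}$ would yield one-dimensional tail sums $\sum_{\ell_j} |\ell_j|^{-\beta}$ that diverge for $\beta \leq 1$; the precise interplay between the Jensen exponent, the logarithmic weight, and the parameter restriction $\beta < \alpha(1-1/(2\lambda))$ must be carefully tuned to recover convergence and to produce the specific form $\max(1, 2^\beta \gamma_j^{\beta/\alpha})$.
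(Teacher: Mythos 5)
Your opening steps (Lemma~\ref{lem:random_error_bound}, the $1/T$ term, inserting $T/r^2_{\alpha,\bsgamma}(\bsh)\geq 1$ on $\Acal_d(T)$) match the paper, but the reduction that follows contains a fatal loss: you replace $\sup_{\bsl\in\ZZ^d\setminus\{\bszero\}}$ by $\sum_{\bsl\in\ZZ^d\setminus\{\bszero\}}$ and aim to bound $\sum_{\bsl\neq\bszero}\omega(\bsl)U(\bsl)$. That quantity is exactly $\EE_{N,\bsz}\bigl[(R_{N,d,\alpha,\bsgamma}(\bsz))^2\bigr]$, and it is too large to yield the theorem. Indeed, for every $N\in\Pcal_M$ and every $\bsz$ the vector $\bsl=(N,0,\ldots,0)$ lies in $P_{N,\bsz}^{\perp}$, so $\omega\bigl((N_0,0,\ldots,0)\bigr)\geq 1/|\Pcal_M|$ for each prime $N_0\in\Pcal_M$, while the $\bsh=\bszero$ term alone gives $U\bigl((N_0,0,\ldots,0)\bigr)\geq \gamma_1^2/N_0^{2\alpha}\geq \gamma_1^2/M^{2\alpha}$; summing over $N_0\in\Pcal_M$ shows $\sum_{\bsl\neq\bszero}\omega(\bsl)U(\bsl)\geq \gamma_1^2/M^{2\alpha}$. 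But the claimed bound decays like $M^{-(2\lambda-\lambda\beta/\alpha+1/2)}$, and one may choose $\lambda$ close to $\alpha$ and $\beta$ small so that $2\lambda-\lambda\beta/\alpha+1/2>2\alpha$; for such admissible parameters your intermediate quantity eventually exceeds the bound you are trying to prove (whenever $\gamma_1>0$). The whole point of the randomization gain of $M^{-1/2}$ is that for each $\bsh$ only the single worst frequency $\bsl$ is charged, and for that one $\bsl$ the probability bound $\omega(\bsl)\lesssim \log(1+\|\bsl\|_\infty)/(\tau M)$ supplies an extra factor $1/M$; summing over all $\bsl$ in the dual lattice throws this away.

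The second half of your plan also cannot be completed even on its own terms: after the interpolation, the sum $\sum_{\bsl\neq\bszero}(\log(1+\|\bsl\|_\infty))^{1/2}\,U(\bsl)^{\beta/(2\alpha)}$ diverges, since along a coordinate axis $U(\bsl)^{\beta/(2\alpha)}\gtrsim \gamma_1^{\beta/\alpha}|\ell_1|^{-\beta}$ with $\beta<1$, and the support of $\omega$ still contains $(kN,0,\ldots,0)$ for all $k\in\NN$ and $N\in\Pcal_M$, so no restriction to the support or tuning of $\beta,\lambda$ restores convergence; the obstacle you flag at the end is not a technicality but the sign that this route is blocked. The paper proceeds differently: it keeps the supremum inside the $\bsh$-sum, uses Lemma~\ref{lem:bound_on_omega}(2) to restrict it to $\bsl$ with $r_{\alpha,\bsgamma}(\bsh)r_{\alpha,\bsgamma}(\bsh+\bsl)\geq H_M^{-1}$, converts $\log(1+\|\bsl\|_\infty)\leq\|\bsl\|_\infty^{\beta}/\beta$ and then $\|\bsl\|_\infty^{\beta}\leq (r^2_{\alpha,\bsgamma}(\bsl))^{\beta/(2\alpha)}\leq (r_{\alpha,\bsgamma}(\bsh)r_{\alpha,\bsgamma}(\bsh+\bsl))^{\beta/\alpha}\prod_j\max(1,2^{\beta}\gamma_j^{\beta/\alpha})$, and finally bounds $\sum_{\bsh}\sup_{\bsl}(r_{\alpha,\bsgamma}(\bsh)r_{\alpha,\bsgamma}(\bsh+\bsl))^{-(2-\beta/\alpha)}$ by a shell decomposition in $\theta_{\alpha,\bsgamma}(\bsh)=\inf_{\bsl\neq\bszero}r_{\alpha,\bsgamma}(\bsh)r_{\alpha,\bsgamma}(\bsh+\bsl)$ with Abel summation, where the counting function $A_{\alpha,\bsgamma}(m)$ is controlled by Cauchy--Schwarz as $A_{\alpha,\bsgamma}(m+1)\leq (m+1)^{1/(2\lambda)}\prod_j(1+2\gamma_j^{1/\lambda}\zeta(\alpha/\lambda))$; the condition $\beta<\alpha(1-1/(2\lambda))$ makes the resulting series converge and produces $H_M^{2-\beta/\alpha-1/(2\lambda)}/M$, i.e.\ the stated rate. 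You would need to adopt this (or an equivalently sup-preserving) mechanism for your argument to go through.
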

\begin{proof}
It follows from Lemma~\ref{lem:random_error_bound} and the definition of $\Acal_d(T)$ that
\begin{align*}
    \left(e^{\rms\text{-}L_2\text{-}\app}_{d, \alpha, \bsgamma}(A^{\rand}_{N,\bsz,\bsDelta,\Acal_d(T)})\right)^2 \leq \frac{1}{T}+\sum_{\bsh\in \Acal_d(T)}\sup_{\bsl \in \ZZ^d\setminus \{\bszero\}}\frac{\omega(\bsl)}{r^2_{\alpha, \bsgamma}(\bsh+\bsl)}.
\end{align*}
In what follows, we denote the sum over $\bsh$ on the right-most side above by $B_M^2$. 
Now, note that the second assertion of Lemma~\ref{lem:bound_on_omega} implies that $\omega(\bsl)=0$ for $\bsl\in \ZZ^d\setminus \{\bszero\}$ if there exists an $\bsh$ such that 
\[ \frac{1}{r^2_{\alpha, \bsgamma}(\bsh) r^2_{\alpha, \bsgamma}(\bsh+\bsl)} > H^2_M.\]
Using this and the first assertion of Lemma~\ref{lem:bound_on_omega}, together with the elementary inequality $\log(1+x)\leq x^{\beta}/\beta$ for $x>0$ and $0<\beta\le 1$ with $\log$ denoting the natural logarithm, we have
\begin{align*}
    B_M^2 & = \sum_{\bsh\in \Acal_d(T)}\sup_{\bsl \in \ZZ^d\setminus \{\bszero\}}\frac{\omega(\bsl)}{r^2_{\alpha, \bsgamma}(\bsh+\bsl)} \\
    & \leq \frac{c}{\tau M}\sum_{\bsh\in \Acal_d(T)}\sup_{\substack{\bsl \in \ZZ^d\setminus \{\bszero\}\\ r_{\alpha,\bsgamma}(\bsh)r_{\alpha,\bsgamma}(\bsh+\bsl)\geq H^{-1}_M }}\frac{\log \left(1+\|\bsl\|_{\infty}\right)}{r^2_{\alpha, \bsgamma}(\bsh+\bsl)} \\
    & \leq \frac{c}{\beta \tau M}\sum_{\bsh\in \Acal_d(T)}\sup_{\substack{\bsl \in \ZZ^d\setminus \{\bszero\}\\ r_{\alpha,\bsgamma}(\bsh)r_{\alpha,\bsgamma}(\bsh+\bsl)\geq H^{-1}_M}}\frac{\|\bsl\|_{\infty}^{\beta}}{r^2_{\alpha, \bsgamma}(\bsh+\bsl)} \\
    & \leq \frac{c}{\beta \tau M}\sum_{\bsh\in \Acal_d(T)}\sup_{\substack{\bsl \in \ZZ^d\setminus \{\bszero\}\\ r_{\alpha,\bsgamma}(\bsh)r_{\alpha,\bsgamma}(\bsh+\bsl)\geq H^{-1}_M }}\frac{\left(r_{\alpha,\bsgamma}^2(\bsl)\right)^{\beta/(2\alpha)}}{r^2_{\alpha, \bsgamma}(\bsh+\bsl)},
\end{align*}
where the last inequality follows from the assumption that $\gamma_j \leq 1$ for all $j$, which ensures that
\[ \|\bsl\|_{\infty}^{\beta}\leq \prod_{\substack{j=1\\ \ell_j\neq 0}}^{d}|\ell_j|^{\beta}\leq \left(\prod_{\substack{j=1\\ \ell_j\neq 0}}^{d}\frac{|\ell_j|^{\alpha}}{\gamma_j}\right)^{\beta/\alpha}=\left(r_{\alpha,\bsgamma}^2(\bsl)\right)^{\beta/(2\alpha)} \]
for any $\bsl\in \ZZ^d\setminus \{\bszero\}.$
As mentioned in \cite[Eq.~(13.7)]{dick2022lattice}, for any pair of indices $\bsh, \bsl\in \ZZ^d$
\begin{align*}
r_{\alpha,\bsgamma}^2(\bsl)\leq r_{\alpha,\bsgamma}^2(\bsh)r_{\alpha,\bsgamma}^2(\bsh+\bsl) \prod_{j=1}^d \max(1,2^{2\alpha} \gamma_j^2 )
\end{align*}
holds, 
so we further have
\begin{align*}
    B_M^2 & \leq \frac{c}{\beta \tau M}\prod_{j=1}^d \max(1,2^{\beta} \gamma_j^{\beta/\alpha} )\sum_{\bsh\in \Acal_d(T)}\sup_{\substack{\bsl \in \ZZ^d\setminus \{\bszero\}\\ r_{\alpha,\bsgamma}(\bsh)r_{\alpha,\bsgamma}(\bsh+\bsl)\geq H^{-1}_M }}\frac{\left(r_{\alpha,\bsgamma}(\bsh)\right)^{\beta/\alpha}}{(r_{\alpha, \bsgamma}(\bsh+\bsl))^{2-\beta/\alpha}}\\
    & \leq \frac{c}{\beta \tau M}\prod_{j=1}^d \max(1,2^{\beta} \gamma_j^{\beta/\alpha} )\sum_{\bsh\in \Acal_d(T)}\sup_{\bsl \in \ZZ^d\setminus \{\bszero\}}r_{\alpha,\bsgamma}^2(\bsh)\, H_M^{2-\beta/\alpha}\\
    & = \frac{c}{\beta \tau M}H_M^{2-\beta/\alpha}\prod_{j=1}^d \max(1,2^{\beta} \gamma_j^{\beta/\alpha} )\sum_{\bsh\in \Acal_d(T)}r_{\alpha,\bsgamma}^2(\bsh) ,
\end{align*}
where the second inequality follows from the condition $r_{\alpha,\bsgamma}(\bsh)r_{\alpha,\bsgamma}(\bsh+\bsl)\geq H^{-1}_M$, and the last equality holds because the expression inside the supremum over $\bsl$ does not depend on $\bsl$.

For the sum over $\bsh$ above, we use \cite[Lemma~1]{kuo2006lattice} to obtain
\begin{align*}
    \sum_{\bsh\in \Acal_d(T)}r_{\alpha,\bsgamma}^2(\bsh) 
    \leq T\, |\Acal_d(T)| \leq T^{1+1/(2\lambda)}\prod_{j=1}^{d}\left(1+2\gamma_j^{1/\lambda}\zeta(\alpha/\lambda) \right),
\end{align*}
which holds for any $1/2<\lambda<\alpha$.
Therefore, we get an upper bound on $B_M^2$ as
\begin{align*}
    B_M^2 & \leq T^{1+1/(2\lambda)}\frac{c}{\beta \tau M}H_M^{2-\beta/\alpha}\prod_{j=1}^{d} \max(1,2^{\beta} \gamma_j^{\beta/\alpha}) \left( 1+2\gamma_j^{1/\lambda}\zeta(\alpha/\lambda)\right)\\
    & \leq C_{\alpha,\beta,\lambda,\tau}\frac{T^{1+1/(2\lambda)}}{M^{2\lambda-\lambda \beta/\alpha+1}}\prod_{j=1}^{d}\left( 1+2^{2\alpha+2}\gamma_j^{1/\lambda}\zeta(\alpha/\lambda)\right)^{4\lambda-2\lambda\beta/\alpha}\max(1,2^{\beta} \gamma_j^{\beta/\alpha}) .
\end{align*}
Thus we are done.
\end{proof}

By balancing the two terms appearing in the error bound shown in Theorem~\ref{thm:rms_error_bound}, we obtain the following error estimate in $M$.
\begin{corollary}\label{cor:final_result}
    Let $M, d \in \NN, \alpha > 1/2, \bsgamma \in[0,1]^{\NN}$ and $\tau \in(0,1)$ with $M\geq 4$ be given. Assume that \eqref{eq:assumption_number_points} holds. For any fixed $\lambda\in [1/2,\alpha)$ and $\beta\in (0,1]$, by choosing $T=M^{(2\lambda-\lambda\beta/\alpha+1)/(2+1/(2\lambda))}$, we have
    \begin{align*}
    & e^{\rms\text{-}L_2\text{-}\app}_{d, \alpha, \bsgamma}(A^{\rand}_{N,\bsz,\bsDelta,\Acal_d(T)})\\
    & \leq \frac{1}{M^{\lambda(2\lambda-\lambda \beta/\alpha+1)/(4\lambda+1)}}\sqrt{1+C_{\alpha,\beta,\lambda,\tau}\prod_{j=1}^{d}\left( 1+2^{2\alpha+2}\gamma_j^{1/\lambda}\zeta(\alpha/\lambda)\right)^{4\lambda-2\lambda\beta/\alpha}\max(1,2^{\beta} \gamma_j^{\beta/\alpha})}, 
    \end{align*}
    with a constant $C_{\alpha,\beta,\lambda,\tau}>0$ independent of $M,d,\bsgamma$.
\end{corollary}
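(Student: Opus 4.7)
The corollary is a direct optimization of the bound from Theorem~\ref{thm:rms_error_bound} in the parameter $T$. The plan is to substitute the prescribed value of $T$ into the two-term bound and check that the resulting estimate is precisely the one claimed; no new ideas are required beyond bookkeeping of exponents.

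First, I would write down the bound from Theorem~\ref{thm:rms_error_bound} in the schematic form
\[
\left(e^{\rms\text{-}L_2\text{-}\app}_{d,\alpha,\bsgamma}(A^{\rand}_{N,\bsz,\bsDelta,\Acal_d(T)})\right)^2 \leq \frac{1}{T} + T\, \frac{K_{d,\alpha,\beta,\lambda,\bsgamma,\tau}}{M^{2\lambda - \lambda\beta/\alpha + 1/2}},
\]
where $K_{d,\alpha,\beta,\lambda,\bsgamma,\tau}$ denotes the full constant
\[
C_{\alpha,\beta,\lambda,\tau}\prod_{j=1}^{d}\left(1 + 2^{2\alpha+2}\gamma_j^{1/\lambda}\zeta(\alpha/\lambda)\right)^{4\lambda - 2\lambda\beta/\alpha}\max\!\left(1,2^{\beta}\gamma_j^{\beta/\alpha}\right).
\]
The standard move is then to choose $T$ to balance the two terms.

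Setting $T = M^{\lambda - \lambda\beta/(2\alpha) + 1/4}$, the first term becomes
\[
\frac{1}{T} = M^{-\lambda + \lambda\beta/(2\alpha) - 1/4},
\]
and the second term becomes
\[
T \cdot \frac{K_{d,\alpha,\beta,\lambda,\bsgamma,\tau}}{M^{2\lambda - \lambda\beta/\alpha + 1/2}} = K_{d,\alpha,\beta,\lambda,\bsgamma,\tau}\, M^{\lambda - \lambda\beta/(2\alpha) + 1/4 - (2\lambda - \lambda\beta/\alpha + 1/2)} = K_{d,\alpha,\beta,\lambda,\bsgamma,\tau}\, M^{-\lambda + \lambda\beta/(2\alpha) - 1/4}.
\]
Thus both terms scale in $M$ with the same exponent $-\lambda + \lambda\beta/(2\alpha) - 1/4$, so the sum is $(1 + K_{d,\alpha,\beta,\lambda,\bsgamma,\tau})\, M^{-\lambda + \lambda\beta/(2\alpha) - 1/4}$.

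Finally, I would take the square root of both sides to obtain
\[
e^{\rms\text{-}L_2\text{-}\app}_{d,\alpha,\bsgamma}(A^{\rand}_{N,\bsz,\bsDelta,\Acal_d(T)}) \leq \frac{\sqrt{1 + K_{d,\alpha,\beta,\lambda,\bsgamma,\tau}}}{M^{\lambda/2 - \lambda\beta/(4\alpha) + 1/8}},
\]
which, after expanding $K_{d,\alpha,\beta,\lambda,\bsgamma,\tau}$, is exactly the claimed bound. There is really no main obstacle here; the only point that deserves a brief comment is that the admissible ranges $\lambda\in(1/2,\alpha)$ and $\beta\in(0,\min(\alpha(1-1/(2\lambda)),1))$ are inherited directly from Theorem~\ref{thm:rms_error_bound}, and the chosen $T$ is automatically positive, so the prerequisites of that theorem remain satisfied.
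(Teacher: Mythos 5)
Your proposal is correct and is exactly the argument the paper intends: substitute the prescribed $T$ into the bound of Theorem~\ref{thm:rms_error_bound}, observe that both terms then carry the same power $M^{-\lambda+\lambda\beta/(2\alpha)-1/4}$, and take square roots. The exponent bookkeeping and the remark that the parameter ranges are inherited from the theorem are all that is needed, so nothing is missing.
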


Let us consider the case where $\lambda\to \alpha-$ and $\beta\to 0+$. The rate of the worst-case RMSE we obtained is of order $M^{-\alpha(2\alpha+1)/(4\alpha+1)+\varepsilon}$ for an arbitrarily small $\varepsilon>0$, which is no worse than $M^{-\alpha/2-1/12+\varepsilon}$ for any $\alpha>1/2$.
Moreover, the upper bound shown above is further bounded independently of the dimension $d$ if $\sum_{j=1}^{\infty}\gamma_j^{1/\alpha}<\infty$ as follows: under the assumption $\sum_{j=1}^{\infty}\gamma_j^{1/\alpha}<\infty$, there exists finite $\tilde{d}$ such that
\[ \sum_{j=\tilde{d}+1}^{\infty}\gamma_j^{1/\alpha}\leq \frac{1}{2},\]
implying that $\gamma_j^{1/\alpha}\leq 1/2$ for any $j>\tilde{d}$. Therefore, for $d>\tilde{d}$, by using the elementary inequality $1+x\leq \exp(x)$, we have
\begin{align*}
    & \prod_{j=1}^{d}\left( 1+2^{2\alpha+2}\gamma_j^{1/\lambda}\zeta(\alpha/\lambda)\right)^{4\lambda-2\lambda\beta/\alpha}\max(1,2^{\beta} \gamma_j^{\beta/\alpha})\\
    & = \prod_{j=1}^{d}\left( 1+2^{2\alpha+2}\gamma_j^{1/\lambda}\zeta(\alpha/\lambda)\right)^{4\lambda-2\lambda\beta/\alpha}\times \prod_{j'=1}^{d}\max(1,2^{\beta} \gamma_{j'}^{\beta/\alpha})\\
    & \leq \prod_{j=1}^{d}\exp\left( \left(4\lambda-2\lambda\beta/\alpha\right)2^{2\alpha+2}\gamma_j^{1/\lambda}\zeta(\alpha/\lambda)\right)\times \prod_{j'=1}^{\tilde{d}}\max(1,2^{\beta} \gamma_{j'}^{\beta/\alpha})\\
    & = \exp\left( \left(4\lambda-2\lambda\beta/\alpha\right)2^{2\alpha+2}\zeta(\alpha/\lambda)\sum_{j=1}^{d}\gamma_j^{1/\lambda}\right)\times \prod_{j'=1}^{\tilde{d}}\max(1,2^{\beta} \gamma_{j'}^{\beta/\alpha})\\
    & \leq \exp\left( \left(4\lambda-2\lambda\beta/\alpha\right)2^{2\alpha+2}\zeta(\alpha/\lambda)\sum_{j=1}^{\infty}\gamma_j^{1/\alpha}\right)\times \prod_{j'=1}^{\tilde{d}}\max(1,2^{\beta} \gamma_{j'}^{\beta/\alpha}),
\end{align*}
where the right-most side is independent of the dimension $d$.

%%%%%%%%%%%%%%%%%%%%%%%%%%%%%%%%%%%%%%%%%%
%%%%%%%%%%%%%%%%%%%%%%%%%%%%%%%%%%%%%%%%%%
\section{Lower bound}\label{sec:lower}
Here, we prove a lower bound on the worst-case root mean squared $L_2$-approximation error $e^{\rms\text{-}L_2\text{-}\app}_{d, \alpha, \bsgamma}(A^{\rand}_{N,\bsz,\bsDelta,\Acal_d(T)})$ of our randomized lattice-based algorithm.
First, we present the following lemma, which is essentially due to Byrenheid et al.\ \cite[Lemma~4]{byrenheid2017tight}. 
This result is useful for constructing a fooling function $p_M \in H_{d,\alpha,\gamma}$ with $\Vert p_M \Vert_{d,\alpha,\gamma} = 1$, which is challenging because our algorithm involves randomly drawing $N$, $\bsz$, and $\bsDelta$.

\begin{lemma}\label{lem:lower_bound}
Given $N,d \in \NN$ such that $N$ is a prime and $d\geq 2$, let 
\begin{align*}
X_{d,\sqrt{N}} := \{-\lfloor \sqrt{N} \rfloor,\ldots, \lfloor \sqrt{N}  \rfloor\} \times \{-\lfloor \sqrt{N} \rfloor,\ldots, \lfloor \sqrt{N}  \rfloor\}\times \underbrace{\{ 0\}\times \cdots \times \{ 0\}}_{(d-2) \text{ times}}.
\end{align*}
For any $\bsz\in \{1,\ldots,N-1\}^d$, there exists $\bsh \in X_{d,\sqrt{N}}$ such that $\bsh \in P_{N,\bsz}^{\perp}$ and $h_1,h_2\neq 0$.
\end{lemma}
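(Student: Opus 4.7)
The statement is a two-dimensional Minkowski/pigeonhole-type fact, lifted trivially into $d$ dimensions by setting the last $d-2$ coordinates to zero. So I would reduce immediately to the two-dimensional problem: given a prime $N$ and $z_1,z_2\in\{1,\ldots,N-1\}$, find $h_1,h_2$ in $\{-\lfloor\sqrt N\rfloor,\ldots,\lfloor\sqrt N\rfloor\}$, both nonzero, with $h_1z_1+h_2z_2\equiv 0\pmod N$; then $\bsh=(h_1,h_2,0,\ldots,0)$ clearly lies in $X_{d,\sqrt N}\cap P_{N,\bsz}^{\perp}$.

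The first step is a standard pigeonhole argument. Consider the map
\[
\varphi\colon \{0,1,\ldots,\lfloor\sqrt N\rfloor\}^2 \longrightarrow \ZZ/N\ZZ,\qquad (a,b)\longmapsto (az_1+bz_2)\bmod N.
\]
The domain has $(\lfloor\sqrt N\rfloor+1)^2>N$ elements, while the codomain has exactly $N$ elements, so there are two distinct pairs $(a,b)\ne(a',b')$ with $\varphi(a,b)=\varphi(a',b')$. Set $h_1:=a-a'$ and $h_2:=b-b'$. Then $(h_1,h_2)\ne(0,0)$, both $|h_1|,|h_2|\le \lfloor\sqrt N\rfloor$, and $h_1z_1+h_2z_2\equiv 0\pmod N$, i.e.\ $\bsh=(h_1,h_2,0,\ldots,0)\in P_{N,\bsz}^{\perp}\cap X_{d,\sqrt N}$.

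The second step is to rule out the degenerate cases $h_1=0$ or $h_2=0$, and this is where the hypothesis that $N$ is prime enters. Suppose $h_1=0$; then $h_2z_2\equiv 0\pmod N$ with $h_2\ne 0$. Since $N$ is prime and $z_2\in\{1,\ldots,N-1\}$, we have $\gcd(z_2,N)=1$, hence $N\mid h_2$. But $|h_2|\le \lfloor\sqrt N\rfloor<N$ (valid for all $N\ge 2$), so $h_2=0$, a contradiction. The case $h_2=0$ is symmetric. Thus both components are nonzero, completing the proof.

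There is no real obstacle here; the only point worth double-checking is the strict inequality $(\lfloor\sqrt N\rfloor+1)^2>N$, which follows from $\lfloor\sqrt N\rfloor+1>\sqrt N$, and the use of primality of $N$ (rather than a weaker coprimality condition) to handle the degenerate cases uniformly over all $\bsz$. The argument is entirely combinatorial and uses neither the smoothness parameter $\alpha$ nor the weights $\bsgamma$, reflecting the fact that the obstruction to approximation by rank-1 lattices is geometric in nature.
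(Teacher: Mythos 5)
Your proof is correct, and its second step (using primality of $N$ to rule out $h_1=0$ or $h_2=0$, since $|h_i|\leq\lfloor\sqrt N\rfloor<N$ and $z_i$ is coprime to $N$) coincides with the paper's. Where you differ is the existence step. The paper argues by contradiction through harmonic analysis: assuming no nonzero $\bsh\in X_{d,\sqrt N}$ lies in $P_{N,\bsz}^{\perp}$, it invokes the character property (Lemma~\ref{lem:character}) to conclude that the Fourier matrix $M_{N,\bsz}\in\mathbb{C}^{N\times(\lfloor\sqrt N\rfloor+1)^2}$ satisfies $M_{N,\bsz}^*M_{N,\bsz}=NI$, hence has full column rank, which is impossible because $(\lfloor\sqrt N\rfloor+1)^2>N$. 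You instead apply the pigeonhole principle directly to the linear form $(a,b)\mapsto(az_1+bz_2)\bmod N$ on $\{0,\ldots,\lfloor\sqrt N\rfloor\}^2$ and take the difference of two colliding pairs. Both arguments hinge on the same counting inequality $(\lfloor\sqrt N\rfloor+1)^2>N$; yours is more elementary and direct, dispensing with the character property and the rank argument, and it exhibits the collision explicitly rather than proving existence by contradiction. The paper's formulation follows Byrenheid et al.\ and fits naturally with the Fourier-analytic machinery already set up, but nothing in the subsequent lower-bound argument depends on that machinery, so your shortcut loses nothing. You are also right to note, and it matches the paper's remark, that the two-dimensional reduction and the primality of $N$ (not merely coprimality of a single $z_j$) are exactly what make the degenerate cases go away uniformly in $\bsz$.
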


\begin{proof}
We prove this lemma by contradiction. Assume that $\bsh \notin P_{N,\bsz}^{\perp}$ for all $\bsh \in X_{d,\sqrt{N}}\setminus \{ \bszero\}$. From Lemma~\ref{lem:character}, this assumption implies that
\begin{align*}
\sum_{k=0}^{N-1} \exp(2\pi i \bsh \cdot \bsx_k)=0,
\end{align*}
for all $\bsh \in X_{d,\sqrt{N}}\setminus \{\bszero\}$. Let $X^{+}_{d,\sqrt{N}}:=X_{d,\sqrt{N}}\cap (\NN\cup \{0\})^d$ and consider the Fourier matrix 
\begin{align*}
M_{N,\bsz}=\left( \exp(2\pi i \bsh \cdot \bsx_k) \right)_{k\in \{ 0,1,\ldots,N-1\},\;\bsh \in X^{+}_{d,\sqrt{N}}} \in \mathbb{C}^{N\times (\lfloor \sqrt{N} \rfloor+1)^2}.
\end{align*}
We have
\begin{equation}
[M_{N,\bsz}^* M_{N,\bsz}]_{\ell,\ell'}=\sum_{k=0}^{N-1} \exp(2\pi i (\bsh^{(\ell)}-\bsh^{(\ell')})\cdot \bsx_k) =
\begin{cases}
N & \text{if } \bsh^{(\ell)}- \bsh^{(\ell')}=\bszero, \\
0 & \text{otherwise},
\end{cases}
\end{equation}
where $ \ell,\ell'\in \{1,2,\ldots,(\lfloor \sqrt{N} \rfloor+1)^2\}$.
Thus, we have
\[ M_{N,\bsz}^* M_{N,\bsz} = NI_{(\lfloor \sqrt{N} \rfloor+1)^2},\]
where $I_m$ denotes the $m\times m$ identity matrix. However, this equality means that the matrix $M_{N,\bsz}$ must have full column rank, which is not possible as we have $N< ( \lfloor \sqrt{N} \rfloor+1)^2.$ Accordingly, there must exist at least one vector $\bsh=(h_1,h_2,0,\ldots,0) \in X_{d,\sqrt{N}}\setminus \{ \bszero \}$, which depends on $\bsz$, such that
$\bsh \in P_{N,\bsz}^{\perp}$.

Now assume that there exists $\bsh\in X_{d,\sqrt{N}}\setminus \{ \bszero \}$ such that $h_2=0$ and $\bsh \in P_{N,\bsz}^{\perp}$. Under this assumption, as we have $\bsh=(h_1,0,\ldots,0)$ with $h_1\neq 0$, $\bsh \in P_{N,\bsz}^{\perp}$ implies that $h_1\cdot z_1\equiv 0 \pmod N$. However, this equality does not hold since $N$ is prime and both $h_1$ and $z_1$ are coprime to $N$. Accordingly, $h_2\neq 0$ for $\bsh \in (X_{d,\sqrt{N}}\cap P_{N,\bsz}^{\perp}) \setminus \{\bszero\}$. Similarly, it holds that $h_1\neq 0$ for $\bsh \in (X_{d,\sqrt{N}}\cap P_{N,\bsz}^{\perp}) \setminus \{\bszero\}$. Thus the result follows.
\end{proof}

\begin{remark}
	While Lemma~\ref{lem:lower_bound} is essentially due to Byrenheid et al.\ \cite[Lemma~4]{byrenheid2017tight}, the statement includes a slight improvement. 
	Byrenheid et al.\ \cite[Lemma~4]{byrenheid2017tight} would directly imply the existence of $\bsh\in X_{d,\sqrt{N}}\cap P_{N,\bsz}^{\perp}$ with $h_1\neq0$ or $h_2\neq 0$, whereas 
	Lemma~\ref{lem:lower_bound} states that such $\bsh$ exists with $h_1\neq0$ and $h_2\neq 0$. This improvement simplifies the subsequent argument in deriving the lower bound.
\end{remark}

Let us define a set $P_d(M)$ by
\begin{align*}
P_d(M):=\{\bsh=(h_1,h_2,0,\ldots,0)\in \ZZ^d, \;| h_1|,\;|h_2| \leq \lfloor \sqrt {M} \rfloor\}.
\end{align*}
Then, for given $M \in \NN$ with $M\geq 4$, consider a trigonometric function $p_{M}$ such that the Fourier coefficients are given by
$$
\widehat{p_{M}}(\bsh):=\left\{
\begin{array}{ll}
(r_{\alpha,\gamma}^2(\bsh) |P_d(M)|)^{-1/2} & \text{if } \bsh \in  P_d(M), \\
0 & \text{otherwise}. \\
\end{array}
\right.
$$
Clearly, it holds that $p_M\in H_{d,\alpha,\gamma}$ and $\| p_M\|_{d,\alpha,\gamma}=1$. With this trigonometric function, we obtain a lower bound on $e^{\rms\text{-}L_2\text{-}\app}_{d, \alpha, \bsgamma}(A^{\rand}_{N,\bsz,\bsDelta,\Acal_d})$.

\begin{theorem}
Let $M, d \in \NN, \alpha > 1/2, \bsgamma \in[0,1]^{\NN}$ and $\tau \in(0,1)$ with $M\geq 4$ and $d\geq 2$ be given. For any index set $\Acal_d$, it holds that
\begin{align*}
e^{\rms\text{-}L_2\text{-}\app}_{d, \alpha, \bsgamma}(A^{\rand}_{N,\bsz,\bsDelta,\Acal_d}) \geq \frac{\sqrt{2}\min(\gamma_1,\gamma_2)}{3M^{\alpha/2+1/2}}.
\end{align*}
\end{theorem}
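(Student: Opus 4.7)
The plan is to use $p_M$ as the fooling function and combine the Parseval-based error expansion (as in the proof of Lemma~\ref{lem:random_error_bound}) with the existence guarantee from Lemma~\ref{lem:lower_bound}, together with a careful case analysis over the membership of two particular frequencies in $\Acal_d$.

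First, I would verify that $\|p_M\|_{d,\alpha,\bsgamma}=1$: indeed
\[
\|p_M\|_{d,\alpha,\bsgamma}^2 = \sum_{\bsh \in P_d(M)} r_{\alpha,\bsgamma}^2(\bsh)\,|\widehat{p_M}(\bsh)|^2 = \frac{1}{|P_d(M)|}\sum_{\bsh \in P_d(M)} 1 = 1.
\]
(If $\min(\gamma_1,\gamma_2)=0$ the bound is trivial, so one may assume $\gamma_1,\gamma_2>0$, in which case $P_d(M)$ lies in the support of $H_{d,\alpha,\bsgamma}$.) Then, for any fixed $N,\bsz$, repeating the orthogonality computation from the proof of Lemma~\ref{lem:random_error_bound} yields
\[
\int_{[0,1)^d}\!\!\|p_M - A^{\rand}_{N,\bsz,\bsDelta,\Acal_d}(p_M)\|_{L_2}^2\,\mathrm{d}\bsDelta \;=\; \sum_{\bsh \notin \Acal_d}|\widehat{p_M}(\bsh)|^2 + \sum_{\bsh \in \Acal_d}\sum_{\bsl \in P_{N,\bsz}^\perp\setminus\{\bszero\}}|\widehat{p_M}(\bsh+\bsl)|^2.
\]

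Next I would invoke Lemma~\ref{lem:lower_bound}, which for each $N \in \Pcal_M$ and $\bsz\in\{1,\ldots,N-1\}^d$ produces some $\bsh^*=\bsh^*(N,\bsz)\in X_{d,\sqrt{N}}\cap P_{N,\bsz}^\perp$ with $h_1^*,h_2^*\neq 0$; since $N\leq M$, in particular $\bsh^*\in P_d(M)$. I would then split into four cases according to whether $\bszero\in\Acal_d$ and whether $\bsh^*\in\Acal_d$. The essential observations are: the first sum contains $|\widehat{p_M}(\bsh^*)|^2$ whenever $\bsh^*\notin\Acal_d$ and $|\widehat{p_M}(\bszero)|^2$ whenever $\bszero\notin\Acal_d$; and the second sum, via $(\bsh,\bsl)=(\bszero,\bsh^*)$, yields $|\widehat{p_M}(\bsh^*)|^2$ whenever $\bszero\in\Acal_d$, while via $(\bsh,\bsl)=(\bsh^*,-\bsh^*)$ it yields $|\widehat{p_M}(\bszero)|^2$ whenever $\bsh^*\in\Acal_d$. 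Enumerating the four cases, the worst case ($\bszero\in\Acal_d$, $\bsh^*\notin\Acal_d$) gives the doubled contribution
\[
\int_{[0,1)^d}\!\!\|p_M - A^{\rand}(p_M)\|_{L_2}^2\,\mathrm{d}\bsDelta \;\geq\; 2|\widehat{p_M}(\bsh^*)|^2 \;=\; \frac{2}{r_{\alpha,\bsgamma}^2(\bsh^*)\,|P_d(M)|},
\]
which accounts for the factor $\sqrt{2}$ in the stated bound. Since this estimate is uniform in the random choice of $N,\bsz,\bsDelta$, it also bounds the expectation.

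Finally I would control the denominator using $|h_i^*|\leq \lfloor\sqrt{N}\rfloor\leq \sqrt{M}$ (giving an upper bound on $r_{\alpha,\bsgamma}^2(\bsh^*)=|h_1^* h_2^*|^{2\alpha}/(\gamma_1\gamma_2)^2$) and the explicit size $|P_d(M)|=(2\lfloor\sqrt{M}\rfloor+1)^2$, then take square roots. The main obstacle is achieving the precise rate $M^{-\alpha/2-1/2}$ and the $\min(\gamma_1,\gamma_2)$ prefactor in the statement: a blunt use of $r_{\alpha,\bsgamma}^2(\bsh^*)\leq M^{2\alpha}/(\gamma_1\gamma_2)^2$ and $|P_d(M)|\leq 9M$ only delivers $\sqrt{2}\,\gamma_1\gamma_2/(3 M^{\alpha+1/2})$; to recover the claimed rate one must either exploit a finer control on the product $|h_1^* h_2^*|$ (for instance replacing the Minkowski-style pigeonhole in Lemma~\ref{lem:lower_bound} by a Minkowski successive-minima argument for the 2D sublattice of $P_{N,\bsz}^\perp$) or package the two one-dimensional factors of $r_{\alpha,\bsgamma}$ against the two square-root factors in $|P_d(M)|^{1/2}$ individually, replacing $\gamma_1\gamma_2$ by $\min(\gamma_1,\gamma_2)$ via $\gamma_j\leq 1$.
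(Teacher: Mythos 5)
Your setup (the fooling function $p_M$, the shift-averaged Parseval identity from the proof of Lemma~\ref{lem:random_error_bound}, and the invocation of Lemma~\ref{lem:lower_bound}) matches the paper, and your four-case analysis around $\bszero$ and $\bsh^*$ is sound as far as it goes. The genuine gap is exactly the one you flag at the end, and neither of your proposed repairs closes it. Your argument extracts the quantity $2|\widehat{p_M}(\bsh^*)|^2$ with $\bsh^*$ having \emph{two} nonzero coordinates, so $r^2_{\alpha,\bsgamma}(\bsh^*)=|h_1^*h_2^*|^{2\alpha}/(\gamma_1\gamma_2)^2$ and the best you can say is $|h_1^*h_2^*|\lesssim N\leq M$ (a Minkowski successive-minima argument for the two-dimensional sublattice only gives $|h_1^*h_2^*|\leq cN$, and this is sharp: for Fibonacci-type generating vectors every short dual vector has both coordinates of order $\sqrt{N}$). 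Hence $|\widehat{p_M}(\bsh^*)|^2$ can genuinely be as small as order $(\gamma_1\gamma_2)^2 M^{-2\alpha-1}$, and no repackaging of this same quantity against $|P_d(M)|^{1/2}$ can do better than the rate $M^{-\alpha-1/2}$ you computed; the claimed rate $M^{-\alpha/2-1/2}$ is strictly larger, so the statement does not follow from your estimate.

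The missing idea in the paper's proof is to test the aliasing error not at the frequency $\bszero$ but at the two frequencies $\bsh_1'=(-\ell_1',0,\ldots,0)$ and $\bsh_2'=(0,-\ell_2',0,\ldots,0)$, where $\bsl'=(\ell_1',\ell_2',0,\ldots,0)\in P_{N,\bsz}^{\perp}$ is the vector from Lemma~\ref{lem:lower_bound} (this is where the strengthened conclusion $\ell_1'\neq 0$ \emph{and} $\ell_2'\neq 0$ is used). The aliased index $\bsh_i'+\bsl'$ then has a \emph{single} nonzero coordinate of modulus at most $\lfloor\sqrt{N}\rfloor\leq\sqrt{M}$, so
\begin{align*}
\sum_{\bsl \in P_{N,\bsz}^{\perp}\setminus\{\bszero\}}|\widehat{p_M}(\bsh_1'+\bsl)|^2 \geq \frac{\gamma_2^2}{M^{\alpha}|P_d(M)|},\qquad
\sum_{\bsl \in P_{N,\bsz}^{\perp}\setminus\{\bszero\}}|\widehat{p_M}(\bsh_2'+\bsl)|^2 \geq \frac{\gamma_1^2}{M^{\alpha}|P_d(M)|},
\end{align*}
while $|\widehat{p_M}(\bsh_i')|^2\geq \gamma_i^2/(M^{\alpha}|P_d(M)|)$ as well. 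Bounding the error from below by $\sum_{\bsh\in P_d(M)}\min\bigl(|\widehat{p_M}(\bsh)|^2,\ \sum_{\bsl\in P_{N,\bsz}^{\perp}\setminus\{\bszero\}}|\widehat{p_M}(\bsh+\bsl)|^2\bigr)$ (which dispenses with the case analysis over $\Acal_d$) and keeping only $\bsh\in\{\bsh_1',\bsh_2'\}$ gives $2\min(\gamma_1^2,\gamma_2^2)/(M^{\alpha}|P_d(M)|)\geq 2\min(\gamma_1^2,\gamma_2^2)/(9M^{\alpha+1})$, uniformly in $N\in\Pcal_M$ and $\bsz\in\Zcal_{N,d,\tau}$, which after taking the square root yields the stated bound with the rate $M^{-\alpha/2-1/2}$ and the $\min(\gamma_1,\gamma_2)$ prefactor.
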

\begin{proof}
From the proof of Lemma~\ref{lem:random_error_bound}, for any fixed $N\in \Pcal_M$ and $\bsz\in \Zcal_{N,d,\tau}$, it holds that
\begin{align*}
\int_{[0,1)^d}\| p_M-A^{\rand}_{N,\bsz,\bsDelta,\Acal_d}(p_M)\|_{L_2} ^2\, \mathrm{d} \bsDelta & = \sum_{\bsh\notin \Acal_d} |\widehat{p_M}(\bsh)|^2+\sum_{\bsh\in \Acal_d}\sum_{\bsl \in P_{N,\boldsymbol{z}}^\perp\setminus \{\bszero\}}|\widehat{p_M}(\bsl+\bsh)|^2\\
& \geq \sum_{\bsh\in \ZZ^d}\min\left( |\widehat{p_M}(\bsh)|^2, \sum_{\bsl \in P_{N,\boldsymbol{z}}^\perp\setminus \{\bszero\}}|\widehat{p_M}(\bsl+\bsh)|^2\right)\\
& \geq \sum_{\bsh\in P_d(M)}\min\left( |\widehat{p_M}(\bsh)|^2, \sum_{\bsl \in P_{N,\boldsymbol{z}}^\perp\setminus \{\bszero\}}|\widehat{p_M}(\bsl+\bsh)|^2\right).
\end{align*}

We know from Lemma~\ref{lem:lower_bound} that there exists a vector $
\bsl'=(\ell_1',\ell_2',0,\ldots,0)\in X_{d,\sqrt{N}}\subseteq  P_d(M)$ such that $\bsl'\in P_{N,z}^{\perp}$ and $\ell'_1,\ell'_2\neq 0$. Then, for $\bsh_1'=(-\ell_1',0,0,\ldots,0)\in P_d(M)$, we have
\begin{align*}
 \sum_{\bsl \in P_{N,\boldsymbol{z}}^\perp\setminus \{\bszero\}}|\widehat{p_M}(\bsl+\bsh_1')|^2 \geq |\widehat{p_M}(\bsl'+\bsh_1')|^2=\frac{\gamma^2_2}{\ell_2^{2\alpha}|P_d(M)|}\geq \frac{\gamma^2_2}{M^{\alpha}|P_d(M)|}.
\end{align*}
Similarly, for $\bsh_2'=(0,-\ell_2',0,\ldots,0)\in P_d(M)$, we have
\begin{align*}
 \sum_{\bsl \in P_{N,\boldsymbol{z}}^\perp\setminus \{\bszero\}}|\widehat{p_M}(\bsl+\bsh_2')|^2 \geq |\widehat{p_M}(\bsl'+\bsh_2')|^2=\frac{\gamma^2_1}{\ell_1^{2\alpha}|P_d(M)|}\geq \frac{\gamma^2_1}{M^{\alpha}|P_d(M)|}.
\end{align*}
From these, we further obtain
\begin{align*}
    & \sum_{\bsh\in P_d(M)}\min\left( |\widehat{p_M}(\bsh)|^2, \sum_{\bsl \in P_{N,\boldsymbol{z}}^\perp\setminus \{\bszero\}}|\widehat{p_M}(\bsl+\bsh)|^2\right)\\
    & \geq \min\left( |\widehat{p_M}(\bsh'_1)|^2, \sum_{\bsl \in P_{N,\boldsymbol{z}}^\perp\setminus \{\bszero\}}|\widehat{p_M}(\bsl+\bsh'_1)|^2\right)+\min\left( |\widehat{p_M}(\bsh'_2)|^2, \sum_{\bsl \in P_{N,\boldsymbol{z}}^\perp\setminus \{\bszero\}}|\widehat{p_M}(\bsl+\bsh'_2)|^2\right)\\
    & \geq \min\left( |\widehat{p_M}(\bsh'_1)|^2, \frac{\gamma^2_2}{M^{\alpha}|P_d(M)|}\right)+\min\left( |\widehat{p_M}(\bsh'_2)|^2, \frac{\gamma^2_1}{M^{\alpha}|P_d(M)|}\right)\\
    & \geq 2\min\left( \frac{\gamma^2_1}{M^{\alpha}|P_d(M)|}, \frac{\gamma^2_2}{M^{\alpha}|P_d(M)|}\right)\\
    & = \frac{2\min(\gamma_1^2,\gamma_2^2)}{M^{\alpha}|P_d(M)|}=\frac{2\min(\gamma_1^2,\gamma_2^2)}{M^{\alpha}(2\lfloor \sqrt {M} \rfloor+1)^2}\\
    & \geq \frac{2\min(\gamma_1^2,\gamma_2^2)}{M^{\alpha}(3\sqrt {M})^2}=\frac{2\min(\gamma_1^2,\gamma_2^2)}{9M^{\alpha+1}}.
\end{align*}

Since this lower bound does not depend on $N\in \Pcal_M$ nor $\bsz\in \Zcal_{N,d,\tau}$, we have
\begin{align*}
    & e^{\rms\text{-}L_2\text{-}\app}_{d, \alpha, \bsgamma}(A^{\rand}_{N,\bsz,\bsDelta,\Acal_d})\\
    & = \sup_{ \substack{f \in H_{d,\alpha,\bsgamma} \\ \|f\|_{d,\alpha,\bsgamma} \leq 1}}\sqrt{\frac{1}{|\Pcal_M|}\sum_{N \in \Pcal_M} \frac{1}{| \Zcal_{N,d,\tau}|} \sum_{\bsz \in \Zcal_{N,d,\tau}} \int_{[0,1)^d}\| f-A^{\rand}_{N,\bsz,\bsDelta,\Acal_d(T)}(f)\|_{L_2} ^2\, \mathrm{d} \bsDelta} \\
    & \geq \sqrt{\frac{1}{|\Pcal_M|}\sum_{N \in \Pcal_M} \frac{1}{| \Zcal_{N,d,\tau}|} \sum_{\bsz \in \Zcal_{N,d,\tau}} \int_{[0,1)^d}\| p_M-A^{\rand}_{N,\bsz,\bsDelta,\Acal_d}(p_M)\|_{L_2} ^2\, \mathrm{d} \bsDelta}\\
    & \geq \sqrt{\frac{1}{|\Pcal_M|}\sum_{N \in \Pcal_M} \frac{1}{| \Zcal_{N,d,\tau}|} \sum_{\bsz \in \Zcal_{N,d,\tau}} \frac{2\min(\gamma_1^2,\gamma_2^2)}{9M^{\alpha+1}}}=\frac{\sqrt{2}\min(\gamma_1,\gamma_2)}{3M^{\alpha/2+1/2}}.
\end{align*}
This completes the proof.
\end{proof}
%%%%%%%%%%%%%%%%%%%%%%%%%%%%%%%%%%%%%%%%%%
%%%%%%%%%%%%%%%%%%%%%%%%%%%%%%%%%%%%%%%%%%
\section{Numerical experiments}\label{sec:numerics}
Finally, we now conduct numerical experiments. 
The main purpose here is to validate our theoretical results. Since the lower bound on the $L_2$-approximation error in Section~\ref{sec:lower} is essentially derived from the two-dimensional setting, we restrict our attention to the two-dimensional case, i.e., $d=2$.

Let us consider the following two test functions:
\begin{align*}
f_1(\bsx) &=  \prod_{j=1}^d \frac{121\sqrt{33}}{100}\max \left\{\frac{25}{121}-\left(x_j-\frac{1}{2} \right)^2, 0 \right \}, \\
f_2(\bsx) &= \prod_{j=1}^{d} \left(x_j-\frac{1}{2} \right)^2\sin(2\pi x_j-\pi),
\end{align*}
with $d=2.$ The first test function is a scaled periodized kink function, which can be found in \cite{kammerer2019approximation}. A similar form of the second test function was used in \cite{goda2024randomized}. We can verify from the decay of their Fourier coefficients that $f_1\in H_{d,3/2-\varepsilon,\bsgamma}$ and $f_2\in H_{d,5/2-\varepsilon,\bsgamma}$, respectively, for arbitrarily small $\varepsilon>0$.

In the following, for both test functions, Algorithm~\ref{alg:random_cbc} is executed with fixed parameters $\alpha=2$, $\gamma_1=\gamma_2=1/3$, and $\tau=2/3$ to randomly construct good generating vectors $\bsz$. The remaining parameter, $M$, which determines the range for the number of points, is always set to a power of $2$. Similarly, we use the same parameters $\alpha$, $\gamma_1$, $\gamma_2$, and $\tau$ to generate the index set $\Acal_d(T)$, where, as motivated by the result in Corollary~\ref{cor:final_result}, we set $T=M^{2\alpha(2\alpha+1)/(4\alpha+1)}=M^{20/9}$. While the choice of parameters may be improved depending on the test functions, it is often unrealistic to assume that the smoothness and weights of the target function are known in advance. The stability of our randomized lattice-based approximation algorithm in terms of these parameters remains an open question for future work.

To approximate the exact RMSE, we generate $R=1000$ independent realizations of $A^{\rand}_{N,\bsz,\bsDelta,\Acal_d}(f)$ with given $M,\alpha,\bsgamma,\tau$, denoted by $A^{\rand}_{N^{(r)},\bsz^{(r)},\bsDelta^{(r)},\Acal_d}(f)$ for $1 \leq r \leq R$, and estimate the RMSE by
\[ \sqrt{ \frac{1}{R}\sum_{r=1}^{R}\|f-A^{\rand}_{N^{(r)},\bsz^{(r)},\bsDelta^{(r)},\Acal_d}(f)\|_{L_2}^2}. \]
Here, since the $L_2$-norm and the Fourier coefficients of $f_1$ and $f_2$ can be computed analytically, each summand is evaluated exactly as
\begin{align*}
    \|f-A^{\rand}_{N,\bsz,\bsDelta,\Acal_d}(f)\|_{L_2}^2 & =\sum_{\bsh\notin \mathcal{A}_d(T)} |\hat{f}(\bsh)|^2+\sum_{\bsh\in \Acal_d(T)}|\hat{f}(\bsh)-\hat{f}_{N,\bsz,\bsDelta}(\bsh)|^2\\
    & = \|f\|_{L_2}^2-\sum_{\bsh\in \mathcal{A}_d(T)} |\hat{f}(\bsh)|^2+\sum_{\bsh\in \Acal_d(T)}|\hat{f}(\bsh)-\hat{f}_{N,\bsz,\bsDelta}(\bsh)|^2.
\end{align*} 

We now present the results of our numerical experiments. Figure~\ref{fig:results} shows the RMSE of our randomized algorithm $A^{\rand}_{N,\bsz,\bsDelta,\Acal_d}(f)$ plotted against $M$ on a log-log scale. The left and right panels correspond to the test functions $f_1$ and $f_2$, respectively. Each panel also includes three reference lines indicating theoretical convergence rates: $M^{-\alpha/2}$, which corresponds to the known upper and lower bounds for the worst-case error of deterministic lattice-based algorithms; $M^{-\alpha(2\alpha+1)/(4\alpha+1)}$, the upper bound obtained in Section~\ref{sec:upper}; and $M^{-(\alpha/2+1/2)}$, the lower bound obtained in Section~\ref{sec:lower}. Here, $\alpha$ is not the parameter used in Algorithm~\ref{alg:random_cbc}, but instead reflects the actual smoothness of the test functions, namely $\alpha = 1.5$ for $f_1$ and $\alpha = 2.5$ for $f_2$. The thin shaded area around each RMSE curve represents the 95\% confidence interval, computed for the squared $L_2$-approximation error over $R = 1000$ independent trials for each $M$. The narrow width of the interval relative to the RMSE suggests that the reported values are statistically reliable, and the meaningful digits can be considered trustworthy.

As $M$ increases, the convergence behavior improves and appears to approach the asymptotic rate $M^{-\alpha(2\alpha+1)/(4\alpha+1)}$ shown in Section~\ref{sec:upper}. However, the observed decay of the error stays clearly above the theoretical lower bound $M^{-(\alpha/2+1/2)}$, providing numerical support for the upper bound rather than the lower bound. Closing the gap between the upper and lower bounds remains an open problem for future research.

\begin{figure}[htbp]
    \centering
    \includegraphics[width=0.45\linewidth]{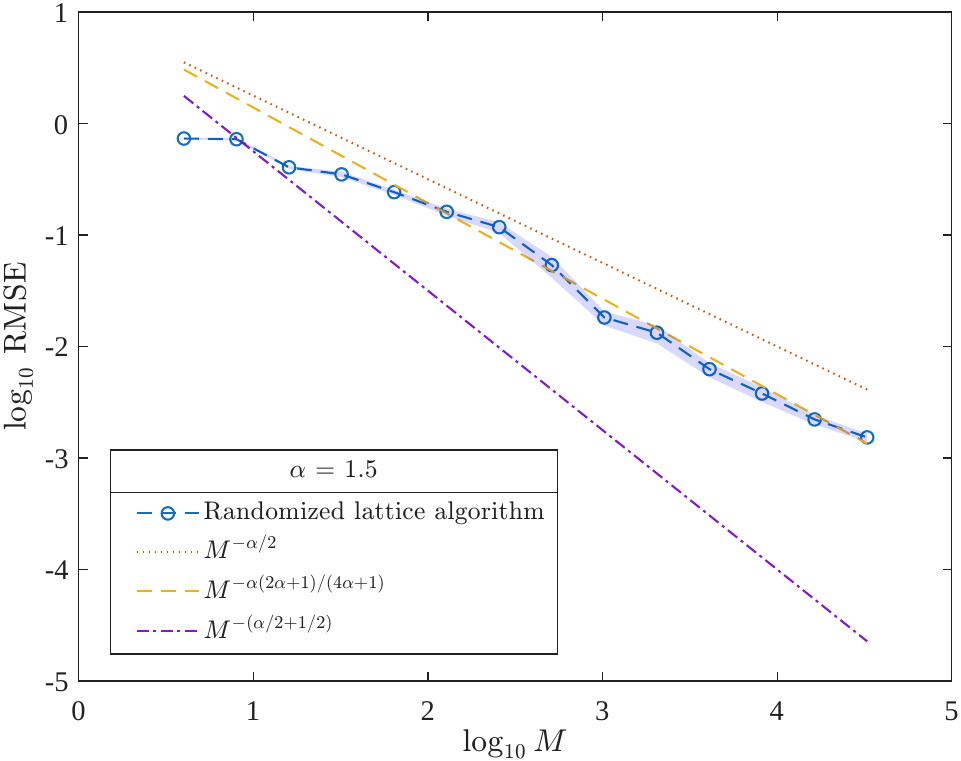}\, 
    \includegraphics[width=0.45\linewidth]{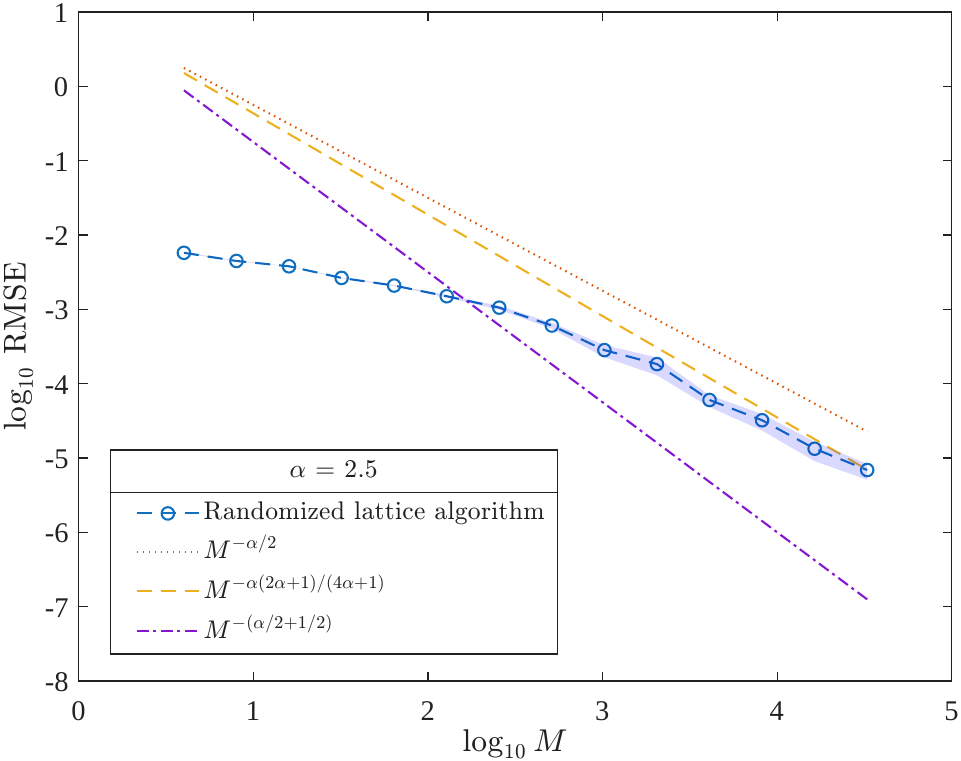}
    \caption{Results for the test functions $f_1$ (left) and $f_2$ (right).}
    \label{fig:results}
\end{figure}

\section*{Acknowledgements}
The work of T.~G.\ is supported by JSPS KAKENHI Grant Number 23K03210.

\bibliographystyle{amsplain}
\bibliography{ref.bib}

\end{document}